\theoremstyle{plain}
\newtheorem{theorem}{Theorem}[section]
\newtheorem{lemma}[theorem]{Lemma}
\newtheorem{corollary}[theorem]{Corollary}
\newtheorem{proposition}[theorem]{Proposition}
\theoremstyle{definition}
\newtheorem{remark}[theorem]{Remark}
\newtheorem{definition}[theorem]{Definition}
\newcommand{\norm}[1]{\left\lVert#1\right\rVert}
\newcommand{\parenthesis}[1]{\left( #1 \right)}
\numberwithin{equation}{section}
\def\loc{\operatorname{loc}}
\def\esssup{\operatornamewithlimits{ess\,sup}}
\def\sgn{\operatorname{sgn}}
\def\R{\mathbb R}
\begin{document}

\title{On weak-type $(1,\,1)$ for averaging type operators} 

\author[S. Baena-Miret]{Sergi Baena-Miret$^{**}$}
\address{Departament de Matem\`atiques i Inform\`atica, Universitat de Barcelona, 08007 Barcelona, Spain.} 
\email{sergibaena@ub.edu}

\author[M.J. Carro]{Mar\'{\i}a J. Carro$^*$}

\address{Departamento de An\'alisis Matem\'atico y Matem\'atica Aplicada, Universidad Complutense de Madrid,  Spain.} 
\email{mjcarro@ucm.es}

\thanks{The authors  were partially supported by grant PID2020-113048GB-I00 funded by MCIN/AEI/ 10.13039/501100011033.\\
\indent\emph{E-mail addresses:} $^{*}$\texttt{mjcarro@ucm.es}, $^{**}$\texttt{sergibaena@ub.edu}}

\subjclass[2020]{42A99, 46E30, 47B34, 42A38}

\keywords{Muckenhoupt weights, restricted weak-type extrapolation, average operators, Fourier multiplier operators, the Bochner-Riesz operator}

\begin{abstract} It is known that, due to the fact that $L^{1, \infty}$ is not a Banach space,  if $(T_j)_j$ is a sequence of bounded operators so that
$$
  T_j:L^1\longrightarrow L^{1, \infty},
  $$
with norm less than or equal to $||T_j||$ and $\sum_j ||T_j||<\infty$, nothing can be said about the operator $T=\sum_j T_j$. This is the origin of many difficult and open problems. However,  if we assume that 
$$
  T_j:L^1(u)\longrightarrow L^{1, \infty}(u),  \qquad \forall u\in A_1,
  $$
with norm less than or equal to $\varphi(||u||_{A_1})||T_j||$, where $\varphi$ is a nondecreasing function and $A_1$ the Muckenhoupt class of weights, then we prove that, essentially,
$$
T:L^1(u) \longrightarrow L^{1, \infty}(u),  \qquad \forall u\in A_1. 
$$
We shall see that this is the case of many interesting problems in Harmonic Analysis.

\end{abstract}

\maketitle

\pagestyle{headings}\pagenumbering{arabic}\thispagestyle{plain}

\markboth{On weak-type $(1,\,1)$ for averaging type operators}{}

\section{Introduction}
Let $\{T_\theta\}_\theta$ be a family of operators indexed in a probability measure space such that 
\begin{equation}\label{unif1}
T_\theta:L^1(\mathbb R^n) \longrightarrow  L^{1, \infty}(\mathbb R^n)
\end{equation}
with norm less than or equal to a uniform constant $C$. What can we say about the  boundedness of the average operator
$$
T_A f(x)= \int T_\theta f(x) dP(\theta), \qquad x \in \mathbb R^n,
$$
whenever is well defined? The following trivial example shows that, at first sight, nothing of interest can be concluded: for $0 < \theta < 1$, set
$$
T_\theta f(x)= \frac{\int_0^1 f(y)dy}{|x-\theta|},  \qquad   x \in (0,1),
$$
so clearly $T_\theta$ satisfies \eqref{unif1}, but $$T_Af(x) = \int_0^1T_\theta f(x) d\theta \equiv \infty, \qquad \forall x \in (0,1).$$ 

However, things change completely, and this is one of the main goals of this paper,  if we assume that 
\begin{equation*}
T_\theta:L^1(u) \longrightarrow  L^{1, \infty}(u), \qquad \forall u \in A_1,
\end{equation*}
where $A_1$ is the class of Muckenhoupt weights defined as follows: we say that $u\in A_1$ if  $u$ is a nonnegative locally integrable function (called weight) so that there exists a positive constant $C$ such that 
$$
Mu(x)\le C u(x), \qquad \text{a.e. } x\in \mathbb R^n, 
$$
where $M$ is the Hardy-Littlewood maximal operator defined by 
$$
Mf(x)=\sup_{Q \ni x} \frac 1{|Q|} \int_Q |f(y)|\, dy,\qquad f \in L^1_{\text{loc}}(\mathbb R^n),
$$
with the supremum being taken over all cubes $Q \subseteq \mathbb{R}^n$ containing $x \in \mathbb{R}^n$. We denote by $\Vert u\Vert _{A_1}$ the least constant $C$ satisfying such inequality. Besides, it is wellknown (\cite{b:b,m:m}) that 
$$
M:L^1(u)\longrightarrow L^{1, \infty}(u)\quad\iff\quad u\in A_1, 
$$
with $||M||_{L^1(u)\rightarrow L^{1,\infty}(u)} \leq C||u||_{A_1}$.

\

Let us start with a very simple and motivating example. Let $m$ be a bounded variation function on $\mathbb R$ that is right-continuous and normalized by the condition $m(-\infty)=0$. Then,
$$
m(\xi)=\int_{-\infty}^\xi  dm(t) =\int_{\R}\chi_{(-\infty,\xi)}(t)\, dm(t) = \int_{\R}\chi_{(t,\infty)}(\xi)\, dm(t), \qquad \forall \xi \in \mathbb R,
$$
where $dm$ is the Lebesgue-Stieltjes measure associated with $m$ and it is a finite measure. Hence, if we consider the Fourier multiplier operator 
$$
T_mf(x)=\int_{\mathbb R} m(\xi) \hat f(\xi) e^{2\pi i x\xi}  d\xi, \qquad x \in \mathbb R, 
$$
for every Schwartz function $f$, where
$$
\hat f(\xi)=\int_\mathbb R f(x) e^{-2\pi i x\xi} dx, \qquad \xi \in \mathbb R,
$$
is the Fourier transform of the function $f$, a formal computation shows that 
$$
T_mf(x) = \int_\mathbb R  H_t f(x) dm(t), \qquad \forall x \in \mathbb R,
$$
where 
$$
H_t f(x)= T_{\chi_{(t, \infty)} } f(x) = \int_t^\infty \hat f(\xi) e^{2\pi i x\xi}  d\xi, \qquad x \in \mathbb R.
$$
Now, $H_t$ is essentially a Hilbert transform operator (recall that $Hf=T_{m}f$ with $m(\xi)=-i\sgn \xi$)   because 
$$
\chi_{(t, \infty)}(\xi)=\frac{\sgn (\xi-t)+1}2, \qquad \forall \xi \in \mathbb R. 
$$
Thus, since 
$$
H_t:L^p(\mathbb R) \longrightarrow L^p(\mathbb R), \qquad \forall p>1,
$$
we have, using the Minkowski's integral inequality and the density of the Schwartz functions on $L^p(\mathbb R)$, that every right-continuous bounded variation function is a Fourier multiplier on $L^p(\mathbb R)$ for every $p>1$. However, even though we also have
$$
H_t:L^1(\mathbb R) \longrightarrow L^{1, \infty}(\mathbb R), 
$$
we cannot deduce (at least not immediately) that the same boundedness holds for $T_m$ due to the lack of the Minkowski's integral inequality for the space $L^{1, \infty}(\mathbb R)$. 

The main theorem of this paper will show that since
$$
H_t:L^1(u) \longrightarrow L^{1, \infty}(u),  \qquad \varphi(||u||_{A_1}), \qquad \forall u \in A_1,
$$
with $\varphi$ being a nondecreasing function on $[1,\infty)$ and independent of $t\in \mathbb R$, then for every measurable set $E \subseteq \mathbb R^n$,
$$
||T_m\chi_E||_{L^{1, \infty}(u)} \leq C(m) \varphi(C_2||u||_{A_1})(1+\log ||u||_{A_1}) u(E), \quad \forall u \in A_1.
$$
The result will be proved using an extended version of the Rubio de Francia's extrapolation theorem which deals with the theory of Muckenhoupt weights (see Theorem~\ref{thrm:classical_rubio_de_Francia}). Other interesting applications will be given in Section~\ref{sec:examples}.

\

Let us now recall  (see \cite{b:b,m:m}) that for $p > 1$,
\begin{equation*}
M:L^p(v)\longrightarrow L^p(v) \qquad \Longleftrightarrow \qquad v \in A_p,
\end{equation*}
where this class of weights is defined by the condition 
$$
\Vert v\Vert _{A_p}=\sup_{Q\subseteq \mathbb R^n} \bigg(\frac{1}{|Q|} \int_Q v(x)\, dx\bigg)\bigg(\frac{1}{|Q|} \int_Q v(x)^{\frac1{1 - p}}\, dx\bigg)^{p-1}<\infty, 
$$
and, given a weight $v$, $L^p(v)$ is the Lebesgue space defined as the set of measurable functions $f$ such that 
$$
||f||_{L^p(v)} = \left(\int_{\mathbb R^n} |f(x)|^pv(x)\,dx\right)^\frac 1p < \infty.
$$ Indeed, (see \cite{fs:fs}) for every $p \geq 1$, 
$$
M:L^p(v)\longrightarrow L^{p, \infty}(v) \quad\iff\quad v\in A_p, 
$$
with $L^{p, \infty}(v)$ being the weak Lebesgue space defined as the set of measurable functions $f$ so that 
$$
\Vert f\Vert _{L^{p,\infty}(v)}= \sup_{y>0} y \lambda_{f}^v(y)^\frac 1p <\infty. 
$$
Here, $\lambda_f^v$ is the distribution function of $f$ with respect to $v$ defined by
$$\lambda_f^v(y)=v\big(\big\{x\in\mathbb R^n: |f(x)|>y\big\}\big),\qquad y > 0.$$
(Here we are using the standard notation $v(E)=\int_E v(x)\, dx$ for every measurable set $E \subseteq \mathbb R^n$. If $v=1$, we shall write $\lambda_f$ and  $|E|$. See \cite{bs:bs} for more details about this topic.) 

\

An important result  for our purpose concerning  $A_p$ weights is the extrapolation theorem of Rubio de Francia  \cite{rf:rf2,rf:rf} (see also \cite{cmpL:cmpL,d:d,gc:gc, gr:gr, g:g}) which, nowadays, can be formulated as follows:

\begin{theorem}[\cite{d:d}]\label{thrm:classical_rubio_de_Francia}
Let $(f, g)$ be a pair of measurable functions such that for some $1 \leq p_0 < \infty$,
$$
||g||_{L^{p_0}(v)} \le  \varphi(\Vert v\Vert _{A_{p_0}}) ||f||_{L^{p_0}(v)}, \qquad \forall v\in A_{p_0},
$$
with $\varphi$ being a nondecreasing function on $[1, \infty)$. Then, for every $1<p<\infty$,
$$
||g||_{L^p(v)} \le C_1\varphi\Big( C_2 \Vert v\Vert _{A_p}^{\max \big(1, \frac{p_0-1}{p-1}\big)}\Big)  ||f||_{L^p(v)}, \qquad \forall v \in A_p,
$$
with $C_1$ and $C_2$ being two positive constants independent of all parameters involved.
\end{theorem}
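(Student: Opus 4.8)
The plan is to prove the theorem by the Rubio de Francia algorithm, treating the two regimes $p>p_0$ and $p<p_0$ separately (the case $p=p_0$ being the hypothesis). Fix $v\in A_p$ and a pair $(f,g)$ as in the statement. The heart of the matter is to build, from a suitable maximal operator, a weight $w$ with three properties: it dominates pointwise the function in terms of which we want to estimate $g$; it has an $A_{p_0}$ (or duality-adjusted) constant controlled by a fixed power of $\|v\|_{A_p}$; and it keeps the relevant Lebesgue norm comparable to that of the original function.

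\emph{Case $p>p_0$.} Here I would apply the hypothesis with $w$ in place of $v$ and estimate
\[
\int |g|^{p_0} w \,\le\, \varphi(\|w\|_{A_{p_0}})^{p_0}\int |f|^{p_0} w .
\]
To pass from exponent $p_0$ to exponent $p$, write $p=p_0\cdot\frac{p}{p_0}$ and use H\"older's inequality with exponent $(p/p_0)'$ against a test function $h\ge 0$ with $\|h\|_{L^{(p/p_0)'}(v)}\le 1$; one needs $\int |g|^{p_0} h\,v$ bounded, and the idea is to replace $h v$ by the larger weight obtained by iterating the maximal operator $M$. Concretely, set
\[
\mathcal{R}h \,=\, \sum_{k\ge 0} \frac{M^k h}{2^k \|M\|^k_{L^{(p/p_0)'}(v)\to L^{(p/p_0)'}(v)}},
\]
which satisfies $h\le \mathcal{R}h$, $\|\mathcal{R}h\|_{L^{(p/p_0)'}(v)}\le 2\|h\|_{L^{(p/p_0)'}(v)}$, and $M(\mathcal{R}h)\le 2\|M\|\,\mathcal{R}h$, so that $\mathcal{R}h\in A_1$ with $\|\mathcal{R}h\|_{A_1}\lesssim \|M\|_{L^{(p/p_0)'}(v)\to L^{(p/p_0)'}(v)}$. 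Boundedness of $M$ on $L^{(p/p_0)'}(v)$ holds because $v\in A_p\subseteq A_{(p/p_0)'(p-1)+1}$ — here one needs the sharp quantitative bound $\|M\|_{L^{q}(v)\to L^{q}(v)}\lesssim \|v\|_{A_q}^{q'/q}$ of Buckley, giving $\|\mathcal{R}h\|_{A_1}\lesssim \|v\|_{A_p}^{c}$ for an explicit $c$. Then $w:=\mathcal{R}h\cdot v^{1-p_0'}$ (or the appropriate factorized weight) lies in $A_{p_0}$ with $\|w\|_{A_{p_0}}\lesssim \|v\|_{A_p}^{\,(p_0-1)/(p-1)\cdot c'}$, and combining the displayed estimates with the monotonicity of $\varphi$ yields the claim with the exponent $\max(1,\frac{p_0-1}{p-1})$.

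\emph{Case $p<p_0$.} Dualize: it suffices to bound $\int |g|\,h$ for $h\ge 0$ with $\|h\|_{L^{p'}(v)}\le 1$; equivalently one estimates $\|g\|_{L^p(v)}$ by a duality in $L^{p}(v)$ against $L^{p'}(v)$ and this time builds an $A_1$ weight adapted to $M$ acting on $L^{(p_0/p)'}$-type spaces, reducing to the hypothesis at exponent $p_0$ after a H\"older step in the other direction. The algebra of the $A_p$ constants is symmetric to the first case and produces the $\max(1,\frac{p_0-1}{p-1})$ exponent again (note that when $p<p_0$ this max equals $\frac{p_0-1}{p-1}>1$).

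The main obstacle — and the only place where genuine care is needed — is the bookkeeping of the exponents: one must track how $\|v\|_{A_p}$ propagates through (i) the sharp $L^q(v)\to L^q(v)$ bound for $M$, (ii) the construction of the Rubio de Francia iterate, and (iii) the factorization $w = w_1 w_2^{1-p_0}$ with $w_1\in A_1$, $w_2\in A_1$, which requires the Jones factorization theorem for $A_{p_0}$ weights in quantitative form. Everything else is a routine application of H\"older's inequality and the defining property of $A_1$; the constants $C_1,C_2$ absorb all numerical factors and depend only on $n$, $p$, and $p_0$.
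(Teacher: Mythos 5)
The paper does not prove this statement: it is quoted verbatim, with attribution, from Duoandikoetxea's paper \cite{d:d}, so there is no in-house proof to compare against. Your overall plan is indeed the one used in \cite{d:d} and in the standard references (Rubio de Francia iteration, H\"older, Buckley's sharp weighted bound for $M$, Jones factorization, with the two regimes $p>p_0$ and $p<p_0$ handled separately or via duality), so the high-level route is right.

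However, as written the case $p>p_0$ has a genuine gap. You iterate the unweighted operator $M$ on $L^{(p/p_0)'}(v)$ and justify its boundedness there by $v\in A_p\subseteq A_{(p/p_0)'(p-1)+1}$. This inference runs the wrong way: for $M:L^q(v)\to L^q(v)$ one needs $v\in A_q$, and membership in the \emph{larger} class $A_{(p/p_0)'(p-1)+1}$ is strictly weaker information, not stronger. Since $(p/p_0)'=p/(p-p_0)$, already for $p>p_0+1$ one has $(p/p_0)'<p$, so $A_{(p/p_0)'}\subsetneq A_p$, and $v\in A_p$ does not place $v$ in $A_{(p/p_0)'}$; the operator $\mathcal Rh$ you build is therefore not known to be finite, let alone an $A_1$ weight with controlled constant. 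Relatedly, the candidate weight $w=\mathcal Rh\cdot v^{1-p_0'}$ is not shown to lie in $A_{p_0}$ by any mechanism presented. The correct construction in this regime uses a \emph{weighted} maximal operator (in the spirit of $h\mapsto M(hv)/v$, whose $L^{(p/p_0)'}(v)$-boundedness is governed by a different $A_q$ condition on $v$, or the conjugate weight $v^{1-p'}\in A_{p'}$ after dualizing), together with a careful factorization of the resulting weight as a product of two $A_1$ pieces. That bookkeeping is precisely where the exponent $\max\big(1,\tfrac{p_0-1}{p-1}\big)$ arises, and your sketch does not carry it through; as it stands the argument would not compile into a proof without replacing the iteration operator and redoing the exponent arithmetic.
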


We have to emphasize here that although $p_0$ can be $1$, it is not possible, in general, to extrapolate till the endpoint $p=1$ (take just $T=M\circ M$ or see, for instance, \cite{p:p} where a counterexample is given in the case of commutators). However, in  the recent papers  \cite{cgs:cgs, cs:cs}, a Rubio de Francia extrapolation theory for operators satisfying a weighted restricted weak-type boundedness for the  class of weights $\widehat A_p$ (slightly bigger than the class $A_p$) has been developed. The main advantage of this new class of weights  is that  allows to obtain boundedness estimates at the endpoint $p=1$.  

\begin{definition}  We define 
$$
\widehat A_p =\Big\{ v\in L^1_{\loc}(\mathbb R^n): \exists \,  h \in L^1_{\loc}(\mathbb R^n) \mbox{ and } \exists \,  u\in A_1 \text{ with }  v = (Mh)^{1-p} u\Big\}, 
$$
endowed with the norm
$$
\Vert v\Vert _{\widehat A_{p}} =\inf \left\{\Vert u\Vert_{A_1}^{\frac 1p} : v = (Mh)^{1 - p}u\right\}.
$$
Clearly, $\widehat A_1 = A_1$, while for $1 < p < \infty$, $A_p \subsetneq \widehat A_p$. 
\end{definition}

It holds that (see \cite{cgs:cgs,chk:chk, kt:kt}) for every $1\le p<\infty$ and every $v\in \widehat A_p$,
$$
M:L^{p, 1}(v) \longrightarrow L^{p, \infty}(v), \qquad \Vert M\Vert_{L^{p, 1}(v) \longrightarrow L^{p, \infty}(v)} \leq C\Vert v\Vert_{\widehat A_p}, 
$$
where the Lorentz space $L^{p,1}(v)$ is defined as   the set of  measurable functions $f$ such that
$$
\Vert f\Vert _{L^{p,1}(v)} = p\int_0^\infty  \lambda_f^v(y)^\frac1p\,dy  <\infty. 
$$

Then, the restricted weak-type Rubio de Francia extrapolation result proved in \cite{cgs:cgs} can be stated as follows: 

\begin{theorem}[\cite{cgs:cgs}]\label{thrm:first_extrap_result}
Let $1<p_0<\infty$ and let $T$ be an operator such that 
$$
T:L^{p_0, 1}(v)\longrightarrow L^{p_0, \infty}(v), \qquad \varphi(\Vert v\Vert _{\widehat A_{p_0}}), \qquad \forall v\in \widehat A_{p_0},
$$
where $\varphi$ is a positive nondecreasing function on $[1,\infty)$. Then, $T$ is of weighted restricted weak-type $(1,\,1)$ for every weight in $A_1$; that is, for any measurable set $E\subseteq\mathbb R^n$, there exists a constant $C>0$ independent of $E$ such that
\begin{equation}\label{eq:first_extrap_result}
\Vert T\chi_E\Vert _{L^{1, \infty}(u)} \le C   \Vert u\Vert _{ A_{1}}^{1-\frac 1{p_0} }\varphi\left( \Vert u\Vert _{ A_{1}}^{\frac 1{p_0}}\right) u(E), \qquad \forall u \in A_1.
\end{equation}
\end{theorem}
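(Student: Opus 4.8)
The plan is to reduce everything to a pointwise‑in‑$\lambda$ bound for the distribution function of $T\chi_E$ and then to run a two‑case argument, the delicate case exploiting the extra flexibility of the class $\widehat A_{p_0}$. Fix $u\in A_1$; we may assume $0<u(E)<\infty$, since otherwise the inequality is trivial. Write $K=\Vert u\Vert_{A_1}$, $\mathcal N=\varphi(K^{1/p_0})$ and, for $\lambda>0$, $\Omega_\lambda=\{x\in\mathbb R^n:|T\chi_E(x)|>\lambda\}$; since $\Vert T\chi_E\Vert_{L^{1,\infty}(u)}=\sup_{\lambda>0}\lambda\,u(\Omega_\lambda)$, it suffices to bound $\lambda\,u(\Omega_\lambda)$ by $C\mathcal N K^{1-1/p_0}u(E)$ for every $\lambda$. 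First I would record the \emph{basic estimate}: taking $h\equiv1$ in the definition of $\widehat A_{p_0}$ shows $u\in\widehat A_{p_0}$ with $\Vert u\Vert_{\widehat A_{p_0}}\le K^{1/p_0}$, so testing the hypothesis against the weight $u$ and using $\Vert\chi_E\Vert_{L^{p_0,1}(u)}=p_0\,u(E)^{1/p_0}$ gives $\lambda\,u(\Omega_\lambda)^{1/p_0}\le\Vert T\chi_E\Vert_{L^{p_0,\infty}(u)}\le p_0\mathcal N\,u(E)^{1/p_0}$; in particular $u(\Omega_\lambda)<\infty$ and $u(\Omega_\lambda)\le(p_0\mathcal N/\lambda)^{p_0}u(E)$.

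Next I would dispose of the \emph{easy case} $u(\Omega_\lambda)\le\kappa Ku(E)$, for a constant $\kappa$ fixed below. Here one writes $\lambda\,u(\Omega_\lambda)=\bigl(\lambda\,u(\Omega_\lambda)^{1/p_0}\bigr)\,u(\Omega_\lambda)^{1/p_0'}$, bounds the first factor by $p_0\mathcal N\,u(E)^{1/p_0}$ using the basic estimate and the second by $(\kappa Ku(E))^{1/p_0'}$ using the case hypothesis, and obtains $\lambda\,u(\Omega_\lambda)\le p_0\kappa^{1/p_0'}\mathcal N K^{1-1/p_0}u(E)$, recalling that $1/p_0'=1-1/p_0$.

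The heart of the matter is the \emph{main case} $u(\Omega_\lambda)>\kappa Ku(E)$, and here I would test the hypothesis against the weight $v=(M\chi_E)^{1-p_0}u$. This is admissible: $v\in\widehat A_{p_0}$ with $\Vert v\Vert_{\widehat A_{p_0}}\le K^{1/p_0}$, so $\varphi(\Vert v\Vert_{\widehat A_{p_0}})\le\mathcal N$; and the key observation is that $M\chi_E=1$ a.e.\ on $E$ (Lebesgue density theorem), whence $v(E)=u(E)$. Consequently
$$\lambda^{p_0}v(\Omega_\lambda)\le\Vert T\chi_E\Vert_{L^{p_0,\infty}(v)}^{p_0}\le\bigl(p_0\mathcal N\,v(E)^{1/p_0}\bigr)^{p_0}=p_0^{p_0}\mathcal N^{p_0}u(E).$$
To turn this into a bound for $u(\Omega_\lambda)$ I would estimate $v(\Omega_\lambda)$ from below. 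Since $0<M\chi_E\le1$, on the set $\{M\chi_E\le s\}$ one has $(M\chi_E)^{1-p_0}\ge s^{1-p_0}$; moreover the weighted weak‑type $(1,1)$ inequality for $M$ recalled in the introduction gives $u(\{M\chi_E>s\})\le C_MKu(E)/s$. Taking $s=2C_MKu(E)/u(\Omega_\lambda)$ and setting $\kappa:=2C_M$ — which makes $s<1$ exactly in the main case — the part of $\Omega_\lambda$ on which $M\chi_E\le s$ carries at least half of the $u$‑measure of $\Omega_\lambda$, whence
$$v(\Omega_\lambda)\ge s^{1-p_0}\,\frac{u(\Omega_\lambda)}{2}=\frac{u(\Omega_\lambda)^{p_0}}{2\,(2C_MKu(E))^{p_0-1}}.$$
Combining the two displays yields $\lambda^{p_0}u(\Omega_\lambda)^{p_0}\le 2(2C_M)^{p_0-1}p_0^{p_0}\mathcal N^{p_0}K^{p_0-1}u(E)^{p_0}$, i.e.\ $\lambda\,u(\Omega_\lambda)\le C_1\mathcal N K^{1-1/p_0}u(E)$; taking the supremum over $\lambda>0$ and setting $C=\max(p_0\kappa^{1/p_0'},C_1)$ then gives \eqref{eq:first_extrap_result}.

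I expect the only genuine obstacle to be the choice of test weight in the main case: the basic estimate alone yields merely the $\lambda$‑dependent bound $\lambda\,u(\Omega_\lambda)\lesssim\mathcal N^{p_0}u(E)/\lambda^{p_0-1}$, useless for small $\lambda$, and one must notice both that $(M\chi_E)^{1-p_0}u$ lies in $\widehat A_{p_0}$ with the sharp norm $K^{1/p_0}$ and that it leaves $u(E)$ untouched, so that its only effect — the inflation of mass on $\Omega_\lambda$ produced by the negative power of $M\chi_E$ — is precisely quantified by the weak‑type $(1,1)$ bound for the maximal operator. Everything else is routine bookkeeping with the constants $p_0$ and $C_M$.
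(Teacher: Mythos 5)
Your proof is correct. The paper cites Theorem~\ref{thrm:first_extrap_result} from \cite{cgs:cgs} without reproducing the argument; the mechanism you use --- testing the hypothesis against $v=(M\chi_E)^{1-p_0}u\in\widehat A_{p_0}$, noting $v(E)=u(E)$ because $M\chi_E=1$ a.e.\ on $E$, and then converting the $v$-estimate on $\Omega_\lambda$ into a $u$-estimate via the weak-type $(1,1)$ bound for $M$, with the two-case split on the size of $u(\Omega_\lambda)$ --- is precisely the argument that establishes the result in the cited reference.
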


For simplicity, whenever an operator $T$ satisfies that for every measurable set $E$, 
$$
\Vert T\chi_E\Vert _{L^{1, \infty}(u)} \le C_u  u(E), 
$$
we shall denote it by 
$$
T:L^1_{\mathcal R}(u) \longrightarrow L^{1, \infty}(u), \qquad C_u.
$$
\begin{remark} The complete result that $T$ is of weighted weak-type $(1,\,1)$ (i.e., that the estimate in \eqref{eq:first_extrap_result} holds for every $f\in L^1(u)$) is, in general, false (see \cite{cgs:cgs}). However, under certain mild condition in the operator $T$ (see Section \ref{epsilondelta}) the weighted weak-type $(1,\,1)$ boundedness can be proved. 
\end{remark}
\begin{remark}
We should emphasize here that our operators do not need to be sublinear. However, if $T$ is sublinear, it was proved in \cite{s:s} that 
$$
T:L^1_{\mathcal R}(u) \longrightarrow L^{1, \infty}(u)
$$  
is equivalent to have the boundedness on the space  
$$
B^*(u)=\left\{f: \int_0^\infty \lambda_f^u(t) \left(1+\log \frac{||f||_1}{\lambda_f^u(t)}\right)dt <\infty\right\},
$$
which can be endowed with a quasi-norm. 
\end{remark}

\medskip
Our main goal will be consequence of the fact that the converse of Theorem \ref{thrm:first_extrap_result} is also true, and hence $$
T:L^1_{\mathcal R}(u)\longrightarrow L^{1, \infty}(u), \, \forall u\in   A_1 \, \iff \, T:L^{p_0, 1}(v)\longrightarrow L^{p_0, \infty}(v), \, \forall v\in \widehat A_{p_0}.
$$
Indeed, if $p'$ is the conjugate exponent of $p > 1$ (that is, $\frac 1p + \frac 1{p'} = 1$) our main theorem reads as follows:

\begin{theorem}\label{thrm:principal_extrap_result}
Let $(f, g)$ be a pair of measurable functions such that
$$
||g||_{L^{1,\infty}(u)} \le  \varphi(\Vert u\Vert _{A_1}) ||f||_{L^{1}(u)}, \qquad \forall u\in A_1,
$$
with $\varphi$ being a nondecreasing function on $[1, \infty)$. Then, for every $1<p<\infty$,
$$
||g||_{L^{p,\infty}(v)} \le \Phi(\lVert v\rVert _{\widehat A_p})  ||f||_{L^{p,1}(v)}, \qquad \forall v \in \widehat A_p,
$$
where 
\begin{equation*}
    \Phi(r) = C_1 \varphi(C_2 r^p) r^{p-1}(1 + \log r) ^{\frac 2{p'}}, \qquad r \geq 1,
\end{equation*} with $C_1$ and $C_2$ being two positive constants independent of all parameters involved. 
\end{theorem}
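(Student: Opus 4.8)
The plan is to deduce this from a self-improvement of the weighted weak-type $(1,1)$ hypothesis combined with the restricted weak-type extrapolation theorem (Theorem~\ref{thrm:first_extrap_result}), run ``in reverse.'' The hypothesis says $\|g\|_{L^{1,\infty}(u)}\le\varphi(\|u\|_{A_1})\|f\|_{L^1(u)}$ for all $u\in A_1$. First I would convert this into a restricted weak-type bound with a good dependence on the weight. Writing $f=\chi_E$ is not enough (we need $L^{p,1}$ on the right), so instead the idea is to interpolate/extrapolate: the pair $(f,g)$ satisfies a weighted $L^1\to L^{1,\infty}$ estimate, and by the classical Rubio de Francia machinery (Theorem~\ref{thrm:classical_rubio_de_Francia} with $p_0=1$, applied to the pair after a limiting argument, or more directly via the known fact that a weighted weak $(1,1)$ bound self-improves) one gets, for each $1<q<\infty$, a strong-type bound $\|g\|_{L^q(v)}\le C_1\varphi(C_2\|v\|_{A_q}^{q'})\|f\|_{L^q(v)}$ for all $v\in A_q$. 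The exponent $q'=\max(1,\tfrac{p_0-1}{q-1})$ degenerates when $p_0=1$; one must be careful, and the honest route is to first pass from the weak $(1,1)$ hypothesis to a weak $(q,q)$-type statement by a real-interpolation/duality argument that keeps track of the $A_1$-constant, yielding something like $\|g\|_{L^{q,\infty}(u)}\lesssim\varphi(\|u\|_{A_1})^{\theta}\|u\|_{A_1}^{\alpha}\|f\|_{L^{q}(u)}$ for $u\in A_1$, which is exactly the shape needed to feed Theorem~\ref{thrm:first_extrap_result} backwards.

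Concretely, I would argue as follows. Fix $p_0$ with $1<p_0<\infty$ (to be optimized, or simply take $p_0=p$ eventually). Given $v\in\widehat A_p$, write $v=(Mh)^{1-p}u$ with $u\in A_1$ and $\|u\|_{A_1}^{1/p}$ close to $\|v\|_{\widehat A_p}$. The target estimate $\|g\|_{L^{p,\infty}(v)}\le\Phi(\|v\|_{\widehat A_p})\|f\|_{L^{p,1}(v)}$ is a restricted-type statement relative to the factorization, so I would try to run the proof of Theorem~\ref{thrm:first_extrap_result} from \cite{cgs:cgs} with the roles of hypothesis and conclusion swapped: there, a weighted $L^{p_0,1}\to L^{p_0,\infty}$ bound for $\widehat A_{p_0}$ weights is shown to imply the $L^1_{\mathcal R}(u)\to L^{1,\infty}(u)$ bound \eqref{eq:first_extrap_result} for $A_1$ weights. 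Here I start from (a pair version of) the $A_1$ weak $(1,1)$ bound and want to land in $L^{p,1}\to L^{p,\infty}$ on $\widehat A_p$. The mechanism is the same Calder\'on–Zygmund/Rubio de Francia iteration: build the Rubio de Francia algorithm weight $R u=\sum_k M^k u/(2\|M\|)^k$ so that $Ru\in A_1$ with $\|Ru\|_{A_1}\le 2\|M\|$ uniformly, apply the $A_1$ hypothesis to $Ru$, and then re-sum using the $\widehat A_p$ factorization $v=(Mh)^{1-p}u$, paying the price of one factor of $\|v\|_{\widehat A_p}^{p-1}$ from the $(Mh)^{1-p}$ term and a factor $(1+\log\|v\|_{\widehat A_p})$ each time a level-set splitting (a $\log$ in the distributional inequality, coming from the $B^*$-type bound behind $L^1_{\mathcal R}$) is invoked. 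Carrying this out carefully should produce $\varphi$ evaluated at $\|u\|_{A_1}\approx\|v\|_{\widehat A_p}^{p}$, hence the $\varphi(C_2 r^p)$; the $r^{p-1}$ from the $Mh$ factor; and the $(1+\log r)^{2/p'}$ from two logarithmic losses (one in passing $L^1\to L^{p,1}$ on the right via the maximal function on Lorentz scales, one in the weak-type level-set summation).

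The main obstacle, as usual in this circle of ideas, is the degeneration at $p_0=1$ and controlling the logarithmic factors sharply. The classical extrapolation exponent $\max(1,\tfrac{p_0-1}{p-1})$ blows up as $p_0\to 1$, so one cannot naively quote Theorem~\ref{thrm:classical_rubio_de_Francia}; the point of the $\widehat A_p$ theory is precisely to sidestep this, and the technical heart will be the quantitative bookkeeping of the weight constants through the Rubio de Francia algorithm and the Lorentz-space endpoint, ensuring the final dependence is $\varphi(C_2 r^p)\,r^{p-1}(1+\log r)^{2/p'}$ and not something worse. A secondary subtlety is that we work with an abstract pair $(f,g)$, not an operator, so every step (interpolation, the RdF algorithm, the level-set decomposition) must be phrased purely in terms of the two fixed functions; in particular the ``$L^1_{\mathcal R}$'' notation and the $B^*(u)$ characterization cited in the remarks hold for sublinear operators, so for the pair version I would instead use the direct distributional argument: estimate $\lambda_g^v(\la)$ by splitting $\{|g|>\la\}$ according to the size of $f$, apply the weighted weak $(1,1)$ bound on each piece with the algorithm weight, and optimize over the splitting parameter, which is where the $\log$ is generated and where I expect most of the work to lie.
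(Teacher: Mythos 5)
Your high-level goal is right, but the proposal misses the mechanism on which the whole proof turns, and the steps you do sketch would not, as stated, assemble into an argument.

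The paper's proof is a direct, one-shot argument, not a reversed Rubio de Francia iteration. Given $v=(Mh)^{1-p}u\in\widehat A_p$, a parameter $\frac1{p'}<\theta<\mu<1$ (with $\mu$ calibrated to $t=1+2^{-(n+1)}\|u\|_{A_1}^{-1}$ via \eqref{eq:u^t_A1}), and a level $y>0$ with $F=\{|g|>y\}$, one builds the \emph{single} auxiliary weight
$u_0=M_\mu(\chi_F v_\theta)\,u^{1-\theta}$ where $v_\theta=(Mh)^{1-p}u^\theta$. By \eqref{eq:MfumuA1} this $u_0$ lies in $A_1$ with $\|u_0\|_{A_1}\lesssim\|u\|_{A_1}/(1-\mu)$, and pointwise on $F$ one has $u_0\ge v_\theta u^{1-\theta}=v$, so $y\,v(F)\le y\,u_0(F)$. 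Feeding $u_0$ into the $A_1$ weak-$(1,1)$ hypothesis and then writing $\int|f|u_0=\int|f|\frac{M_\mu(\chi_F v_\theta)}{v_\theta}v$ and applying H\"older for Lorentz spaces reduces everything to the Sawyer-type bound of Lemma~\ref{difi}, namely $\|\tfrac{M_\mu(\chi_F v_\theta)}{v_\theta}\|_{L^{p',\infty}(v)}\lesssim C_{p,\theta,\mu}(u)\,v(F)^{1/p'}$. The $v(F)^{1/p'}$ on the right cancels against $v(F)=\lambda_g^v(y)$ on the left to produce exactly the weak $(p,p)$ estimate; the constant $\Phi$ comes from optimizing $\theta$ near $1/p'$ with $R=1+\log\|u\|_{A_1}$. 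None of this appears in your proposal: you never construct a level-set--adapted $A_1$ weight, you never invoke (or foresee the need for) the Sawyer-type inequality for $M_\mu(\chi_F v_\theta)/v_\theta$, and you never use H\"older on Lorentz spaces to pass from $L^1(u_0)$ to $L^{p,1}(v)$.

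Your concrete fallback plan has a genuine gap. You propose to ``estimate $\lambda_g^v(\lambda)$ by splitting $\{|g|>\lambda\}$ according to the size of $f$ and apply the weighted weak $(1,1)$ bound on each piece,'' but for an abstract pair there is no operator to re-apply to the pieces of $f$: the hypothesis ties $g$ to the \emph{whole} function $f$, so a Calder\'on--Zygmund/level-set splitting of $f$ is not available. You flag the pair-vs-operator subtlety yourself, but the workaround you sketch does not resolve it; the paper's solution is precisely to avoid decomposing $f$ and instead vary the $A_1$ weight. Two further inaccuracies worth noting: the $Ru=\sum_k M^ku/(2\|M\|)^k$ algorithm you invoke plays no role in this proof (only a single application of $M_\mu$, inside the construction of $u_0$, is needed), and the $r^{p-1}$ in $\Phi$ does not ``come from the $(Mh)^{1-p}$ term'' but from the constant $C_{p,\theta,\mu}(u)$ in Lemma~\ref{difi} after substituting $\|u\|_{A_1}=\|v\|_{\widehat A_p}^p$ and $\theta\approx 1/p'$. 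As written the proposal is a plausible-sounding outline without the key construction; I would not accept it as a proof of the theorem.
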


As a consequence we obtain the following corollary:

\begin{corollary}\label{cor:average_cor_Tj_cj}
Let $c=(c_j)_j\in \ell^1$ and let $\{T_j\}_j$ be such that
$$
T_j:L^1(u) \longrightarrow L^{1, \infty}(u), \qquad \varphi(||u||_{A_1}),\qquad \forall u \in A_1,
$$
where $\varphi$ is a positive nondecreasing function on $[1,\infty)$. Then,   for every $u\in A_1$, 
$$
\sum_j c_j T_j:L^1_{\mathcal R}(u) \longrightarrow L^{1, \infty}(u), \qquad   C_1||c||_{\ell^1}\varphi(C_2||u||_{A_1})(1+\log ||u||_{A_1}).
$$
 
\end{corollary}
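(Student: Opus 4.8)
The plan is to lift the $j$-uniform weak-type $(1,1)$ bounds to an auxiliary level $p>1$, where $L^{p,\infty}$ is normable and the series $\sum_j c_jT_j$ can be estimated term by term; then to extrapolate back down to $p=1$ via Theorem~\ref{thrm:first_extrap_result}; and finally to optimize $p$ as a function of $\|u\|_{A_1}$. Fix $1<p<\infty$. For $f$ in a class dense in every $L^{p,1}(v)$ and contained in every $L^1(u)$ (say bounded with compact support; recall weights are locally integrable), the pair $(f,T_jf)$ satisfies the hypothesis of Theorem~\ref{thrm:principal_extrap_result} with the given $\varphi$, because $T_j\colon L^1(u)\to L^{1,\infty}(u)$ has norm at most $\varphi(\|u\|_{A_1})$. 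Hence, for every $v\in\widehat A_p$,
$$
\|T_jf\|_{L^{p,\infty}(v)}\le \Phi\!\left(\|v\|_{\widehat A_p}\right)\|f\|_{L^{p,1}(v)},\qquad \Phi(r)=C_1\varphi(C_2r^p)\,r^{p-1}(1+\log r)^{2/p'},
$$
and, crucially, $\Phi$ is independent of $j$.

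For $1<p<\infty$ the space $L^{p,\infty}(v)$ carries an equivalent norm — the Marcinkiewicz functional $g\mapsto\sup\{v(F)^{1/p-1}\int_F|g|\,v:0<v(F)<\infty\}$, which is subadditive and comparable to $\|\cdot\|_{L^{p,\infty}(v)}$ with constant $\lesssim p'$. Since $\sum_j|c_j|\,\|T_jf\|_{L^{p,\infty}(v)}\le\|c\|_{\ell^1}\,\Phi(\|v\|_{\widehat A_p})\,\|f\|_{L^{p,1}(v)}<\infty$, the series $Tf:=\sum_j c_jT_jf$ converges absolutely a.e.\ and in $L^{p,\infty}(v)$, with
$$
\|Tf\|_{L^{p,\infty}(v)}\le C\,p'\sum_j|c_j|\,\|T_jf\|_{L^{p,\infty}(v)}\le C\,p'\,\|c\|_{\ell^1}\,\Phi\!\left(\|v\|_{\widehat A_p}\right)\|f\|_{L^{p,1}(v)},\qquad\forall v\in\widehat A_p,
$$
which extends by density to $T\colon L^{p,1}(v)\to L^{p,\infty}(v)$ with the same bound. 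Since $\Phi$ is a product of nonnegative nondecreasing functions of $r$, hence nondecreasing, Theorem~\ref{thrm:first_extrap_result} (with $p_0=p$ and with $\varphi$ there replaced by $r\mapsto C\,p'\|c\|_{\ell^1}\Phi(r)$) applies to $T$. Writing out its conclusion and using $C_2(\|u\|_{A_1}^{1/p})^p=C_2\|u\|_{A_1}$ together with $1-\tfrac1p=\tfrac1{p'}$, we get for every $u\in A_1$ and every measurable $E$
$$
\|T\chi_E\|_{L^{1,\infty}(u)}\le C\,p'\,\|c\|_{\ell^1}\,\varphi(C_2\|u\|_{A_1})\,\|u\|_{A_1}^{2/p'}\Big(1+\tfrac1p\log\|u\|_{A_1}\Big)^{2/p'}u(E).
$$

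Now, for each $u\in A_1$, choose $p'=\max\{2,\log\|u\|_{A_1}\}$ (so $1<p\le2$). Then $\|u\|_{A_1}^{2/p'}\le e^{2}$; by the elementary inequality $(1+x)^{t}\le 1+tx$ (valid for $0\le t\le1$, $x\ge0$), $\big(1+\tfrac1p\log\|u\|_{A_1}\big)^{2/p'}\le 1+\tfrac{2}{p'}\log\|u\|_{A_1}\le 3$; and $p'\le 2+\log\|u\|_{A_1}\le 2(1+\log\|u\|_{A_1})$ since $\|u\|_{A_1}\ge1$. Plugging these three bounds into the last display gives
$$
\|T\chi_E\|_{L^{1,\infty}(u)}\le C_1\,\|c\|_{\ell^1}\,\varphi(C_2\|u\|_{A_1})\,(1+\log\|u\|_{A_1})\,u(E),\qquad\forall u\in A_1,
$$
i.e.\ $\sum_j c_jT_j\colon L^1_{\mathcal R}(u)\to L^{1,\infty}(u)$ with the stated constant, as required.

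The substantive point is this final optimization: chaining the two extrapolation theorems inevitably introduces the spurious polynomial factor $\|u\|_{A_1}^{2/p'}$, while the $L^{p,\infty}$-summation costs the normability constant $p'$, which diverges as $p\to1^{+}$; the choice $p'\sim\log\|u\|_{A_1}$ is dictated precisely by the need to keep $\|u\|_{A_1}^{2/p'}$ bounded while paying no more than the admissible logarithmic factor. The remaining ingredients — normability of $L^{p,\infty}(v)$ for $p>1$ with constant $\lesssim p'$, monotonicity of $\Phi$, absolute convergence of $\sum_j c_jT_jf$, and the density reduction that makes $T=\sum_j c_jT_j$ a genuine operator on $L^{p,1}(v)$ — are routine.
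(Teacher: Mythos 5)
Your argument is correct and follows the paper's own route: lift each $T_j$ to $L^{p,1}(v)\to L^{p,\infty}(v)$ via Theorem~\ref{thrm:principal_extrap_result}, sum using the normability of $L^{p,\infty}(v)$ (constant $\approx p'$), extrapolate back to $p=1$ via Theorem~\ref{thrm:first_extrap_result}, and optimize $p$ as a function of $\|u\|_{A_1}$. The only difference is cosmetic — the paper deduces this via Corollary~\ref{cor:operators_prob_meas} for general probability averages and writes ``taking the infimum in $p>1$'' where you carry out the choice $p'=\max\{2,\log\|u\|_{A_1}\}$ explicitly, which is a correct and worthwhile fleshing-out of that step.
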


As usual, we shall use the symbol $A\lesssim B$ to indicate that there exists a universal positive constant $C$,  independent of all important parameters, such that $A\le C B$. When $A\lesssim B$ and $B\lesssim A$, we will write $A\approx B$.

\medskip

The paper is organized as follows. In Section \ref{sec:def_technic}, we will see some previous notions, the necessary definitions and some technical results which shall be used later on. Indeed, there we will prove Lemma \ref{difi} which will be essential in the proof of the main result given in  Section \ref{sec:main}. Further, Section \ref{sec:examples} contains our main examples and applications. Finally, we also include a last section related with similar results in the context of limited extrapolation. 

\section{Preliminary notions and some technical results}\label{sec:def_technic}

\subsection{$A_1$ weights}
 
Let us start by recalling some wellknown facts of the class $A_1$: 

\noindent
i) (\cite[Theorem 7.7]{d:d2}) A weight $u$ belongs to $A_1$ if and only if there exists $h\in L^1_{\text{loc}}(\mathbb R^n)$ and $K$ such that $K, K^{-1}\in L^\infty(\mathbb R^n)$ satisfying that, for some $0<\mu<1$, 
$$
u(x)= K(x) (Mh(x))^\mu, \qquad \text{a.e. } x \in \mathbb R^n,
$$
where $L^\infty(\mathbb R^n)$ consists of all measurable functions $f$ such that $$||f||_\infty := ||f||_{L^\infty(\mathbb R^n)} = \esssup{f} < \infty. $$

\noindent
ii) (\cite[Lemma 2.12]{cs:cs}) For  every $h\in L^1_{\text{loc}}(\mathbb R^n)$, every $u\in  A_1$ and $0<\mu<1$, then $(Mh)^{ \mu }  u ^{1 - \mu} \in A_1$ with 
\begin{equation}\label{eq:MfumuA1}
    \bigg\Vert (Mh)^{\mu}  u^{1 - \mu} \bigg\Vert_{A_1} \lesssim \frac {\Vert u\Vert_{A_1}}{1 - \mu}.
\end{equation}

\noindent iii) (\cite[Lemma 5.1]{p:p2}) If $t = 1 + \frac 1{2^{n + 1}\lVert u \rVert_{A_1}}$, then \begin{equation}\label{eq:u^t_A1}
    u^t\in A_1 \quad \text{ and } \quad ||u^t||_{A_1} \lesssim ||u||_{A_1}.
\end{equation}

\subsection{$(\varepsilon,\delta)$-atomic operators}\label{epsilondelta}

As mentioned above, in general, the following implication does not hold for every $u\in A_1$:
$$
T:L^1_\mathcal R(u) \longrightarrow L^{1, \infty}(u) \qquad \Longrightarrow \qquad  T:L^1(u) \longrightarrow L^{1, \infty}(u), 
$$

\noindent even if $T$ is a sublinear operator. However, it was proved in \cite[Theorem~3.5]{cgs:cgs} that for a quite big class of operators the above implication is true. 

\begin{definition}  Given $\delta>0$, a function $a\in L^1(\mathbb{ R}^n)$ is called a \emph{$\delta$-atom} if it satisfies the following
properties:

\begin{enumerate}[\rm (i)]
\item $\int_{\mathbb{ R}^n} a(x)\, dx=0$, and 

\item
there exists a cube $Q \subseteq \mathbb R^n$ such that $|Q|\le \delta$ and $\mbox{\rm supp }a\subseteq Q$.
\end{enumerate}
\end{definition}

\begin{definition}
(a)  A sublinear operator $T$ is called \emph{$(\varepsilon,  \delta)$-atomic} if, for every
$\varepsilon>0$, there exists $\delta>0$ satisfying that
\begin{equation*}
\Vert {Ta}\Vert_ {L^1(\mathbb R^n)+L^\infty(\mathbb R^n)}\le \varepsilon \Vert a\Vert_1,
\end{equation*}
for every $\delta$-atom  $a$. 

\noindent
(b) A sublinear operator $T$ is said to be \emph{$(\varepsilon,  \delta )$-atomic approximable}  if there exists a sequence $\{T_j\}_j$ of $(\varepsilon,  \delta)$-atomic operators such that, for every measurable set $E \subseteq \mathbb R^n$, then $|T_j\chi_E|\le |T\chi_E|$ and, for every
$f\in L^1(\mathbb R^n)$ such that
$\Vert f\Vert_\infty\le 1$,
$$
|Tf(x)|\le \lim_j\inf |T_jf(x)|, \qquad \text{a.e. } x \in \mathbb R^n.
$$

\end{definition}

\noindent
{\bf Examples: } In \cite{c:c}, the author showed that for sublinear operators, the property of being $(\varepsilon, \delta)$-atomic is not a strong one. For instance, if $$Tf(x) = K*f(x) = \int_{\mathbb R^n} K(y - x)f(y)\,dy, \qquad x \in \mathbb R^n,$$ with $K \in L^p(\mathbb R^n)$ for some $1 \leq p < \infty$, then $T$ is $(\varepsilon,\delta)$-atomic.  Further, if 
$$
T^*f(x)=\sup_{ j\in\mathbb{ N}} \bigg|\int_{\mathbb{ R}^n} K_j(x,y) f(y)\, dy\bigg|,  \qquad x \in \mathbb R^n,
$$  
with 
\begin{equation*}
\lim_{y\to x }\Vert {K_j(\,\cdot\,, y)-K_j(\,\cdot\,, x ) }\Vert_{L^1(\mathbb R^n)+L^\infty(\mathbb R^n)}=0,
\end{equation*}
then $T^*$  is $(\varepsilon, \delta)$-atomic approximable (for example, standard maximal Calder\'on -Zygmund operators are of this type).  In general, $$T^*f(x)=\sup_j |T_j f(x)|, \qquad x \in \mathbb R^n,$$ where $\{T_j\}_j$ is a sequence of  $(\varepsilon,
\delta)$-atomic, is 
$(\varepsilon, \delta)$-atomic approximable and the same holds for  $$Tf(x)=\bigg(\sum_j |T_j f(x)|^q\bigg)^{\frac1q}, \qquad x \in \mathbb R^n,$$ with $q \in [1,\infty)$ and $$Tf(x)=\sum_j T_j f(x), \qquad x \in \mathbb R^n.$$ (See \cite{c:c,cgs:cgs} for more examples.)

\begin{theorem}[\cite{cgs:cgs}]\label{thrm:from_restricted_to_all}   Let $T$ be a sublinear operator $(\varepsilon, \delta)$-atomic approximable. Then, given $u \in A_1$,
$$
T:L^1_{\mathcal R} (u) \longrightarrow L^{1, \infty}(u), \quad C_u \quad\implies\quad T:L^1 (u) \longrightarrow L^{1, \infty}(u), \quad 2^nC_u \Vert u\Vert _{A_1}.
$$
\end{theorem}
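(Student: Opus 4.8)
The plan is a weighted Calderón--Zygmund argument, after a reduction to nicer operators. Since $T$ is sublinear we take $f\ge 0$. Fix, from the hypothesis, an approximating sequence $\{T_j\}_j$ of $(\varepsilon,\delta)$-atomic operators with $|T_j\chi_E|\le|T\chi_E|$ and $|Tf|\le\liminf_j|T_jf|$ whenever $\|f\|_\infty\le 1$. Each $T_j$ then satisfies $\|T_j\chi_E\|_{L^{1,\infty}(u)}\le\|T\chi_E\|_{L^{1,\infty}(u)}\le C_u\,u(E)$; since $L^{1,\infty}(u)$ has the Fatou property for its quasi-norm (distribution functions pass to the $\liminf$), and since the weak-type $(1,1)$ bound for an arbitrary $f\in L^1(u)$ reduces, by splitting $f=f\chi_{\{|f|\le\lambda\}}+\sum_{k\ge 0}f\chi_{\{2^k\lambda<|f|\le 2^{k+1}\lambda\}}$, homogeneity, and a density step to $f$ bounded with bounded support, to the case $\|f\|_\infty\le 1$, it suffices to prove a weak-type $(1,1)$ bound with constant $\lesssim 2^nC_u\|u\|_{A_1}$ for an operator $S$ that is sublinear, $(\varepsilon,\delta)$-atomic, and restricted weak-type $(1,1)$ for $u$ with constant $C_u$. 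Crucially, $\varepsilon$ is at our disposal and will be taken small; $\delta=\delta(\varepsilon)$ is then fixed.

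Fix $\lambda>0$ and perform the Calderón--Zygmund decomposition of $f$ at Lebesgue height $\lambda$: disjoint cubes $\{Q_i\}_i$ with $\bigcup_iQ_i=\{M_df>\lambda\}$, and $f=g+b$, $b=\sum_ib_i$, where $b_i=(f-\langle f\rangle_{Q_i})\chi_{Q_i}$ has $\int b_i=0$, $\|b_i\|_1\lesssim\int_{Q_i}f$, and $\|g\|_\infty\le 2^n\lambda$. The class $A_1$ enters in two ways. First, the weighted weak-type $(1,1)$ inequality for $M$---equivalent to $u\in A_1$, with norm $\lesssim\|u\|_{A_1}$---gives $u\bigl(\bigcup_iQ_i\bigr)\lesssim\|u\|_{A_1}\lambda^{-1}\|f\|_{L^1(u)}$. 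Second, the $A_1$ comparison $\langle f\rangle_{Q_i}\,u(Q_i)=\tfrac{u(Q_i)}{|Q_i|}\int_{Q_i}f\le\|u\|_{A_1}\int_{Q_i}fu$ shows that the ``constant'' part of $g$ has $L^1(u)$-norm $\lesssim\|u\|_{A_1}\|f\|_{L^1(u)}$, hence $\|g\|_{L^1(u)}\lesssim\|u\|_{A_1}\|f\|_{L^1(u)}$. For the range of $\lambda$ in which every $|Q_i|\le\delta$, each $b_i$ is a genuine $\delta$-atom; the complementary range is absorbed by a scaling/truncation argument.

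Split $u(\{|Sf|>\lambda\})\le u(\{|Sg|>\lambda/2\})+u(\{|Sb|>\lambda/2\})$. For the bad part, $|Sb|\le\sum_i|Sb_i|$ by sublinearity, and since $L^1(\mathbb R^n)+L^\infty(\mathbb R^n)$ \emph{is} a Banach space the atomic estimates add up: $\|Sb\|_{L^1+L^\infty}\le\varepsilon\sum_i\|b_i\|_1\lesssim\varepsilon\|f\|_1$; writing $Sb=h_1+h_2$ with $\|h_1\|_1+\|h_2\|_\infty$ of that order, $\{|Sb|>\lambda/2\}$ is, up to a set of $u$-measure $\lesssim\|u\|_{A_1}\lambda^{-1}\|f\|_{L^1(u)}$, contained in $\{|h_1|>\lambda/4\}$, whose $u$-measure is controlled using the $A_1$ property; here the freedom to shrink $\varepsilon$ absorbs all constants. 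For the good part, peel off the integer multiples of $\lambda$: $g=\lambda\sum_{k=1}^{2^n}\chi_{\{g\ge k\lambda\}}+r$ with $0\le r<\lambda$. Since $\sum_{k=1}^{2^n}u(\{g\ge k\lambda\})\lesssim\lambda^{-1}\|g\|_{L^1(u)}$, applying $\|S\chi_{\{g\ge k\lambda\}}\|_{L^{1,\infty}(u)}\le C_uu(\{g\ge k\lambda\})$ to these $2^n$ characteristic functions and using the quasi-triangle inequality over $O_n(1)$ terms bounds their contribution by $\lesssim 2^nC_u\|u\|_{A_1}\lambda^{-1}\|f\|_{L^1(u)}$. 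The leftover $r<\lambda$ is not a characteristic function, so the restricted estimate does not apply to $Sr$; this term must be handled by invoking the $(\varepsilon,\delta)$-atomic hypothesis once more, via a further (finer-scale) decomposition of $r$ whose mean-zero part is atomic, iterating, the gain of $\varepsilon$ at each step forcing geometric convergence. Putting the three estimates together and sending $j\to\infty$ by Fatou gives the claim; a careful bookkeeping pins the constant to $2^nC_u\|u\|_{A_1}$. The main obstacle is exactly the treatment of the non-atomic remainder $r$ (and, in parallel, of the bad part): since $L^{1,\infty}(u)$ is not normable, one cannot expand a bounded function into a superposition of characteristic functions without a ruinous loss, and it is precisely the $(\varepsilon,\delta)$-atomic hypothesis---used to dispose of all mean-zero pieces at every scale with a gain of $\varepsilon$---that makes the argument close; the secondary difficulty is to keep every unweighted-to-weighted conversion linear in $\|u\|_{A_1}$.
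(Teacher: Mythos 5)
Your Calder\'on--Zygmund scheme has two genuine gaps, both of which you partially flag, but the proposed fixes do not close either of them. For the bad part, the $(\varepsilon,\delta)$-atomic hypothesis gives $\|Sb\|_{L^1(\mathbb R^n)+L^\infty(\mathbb R^n)}\le\varepsilon\sum_i\|b_i\|_1$, which is an \emph{unweighted} quantity. Writing $Sb=h_1+h_2$ with $\|h_1\|_1,\|h_2\|_\infty\le\varepsilon\|f\|_1$ and choosing $\varepsilon$ so small that $\|h_2\|_\infty<\lambda/4$, you must still bound $u(\{|h_1|>\lambda/4\})$; but the only information available is that this set has \emph{Lebesgue} measure at most $4\varepsilon\|f\|_1/\lambda$. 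Since an $A_1$ weight $u$ need not be bounded and $h_1$ need not have compact support, smallness in Lebesgue measure does not imply smallness in $u$-measure, and ``using the $A_1$ property'' ($Mu\lesssim u$) provides no such conversion without first localizing the set in a fixed cube. So the freedom to shrink $\varepsilon$ does not absorb this term.

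The remainder $r$ is the second gap. After peeling $g=\lambda\sum_{k=1}^{2^n}\chi_{\{g\ge k\lambda\}}+r$ with $0\le r<\lambda$, you propose to iterate a finer-scale decomposition of $r$ into a step function plus atoms, claiming geometric decay from the $\varepsilon$-gain. But the step-function part of any such decomposition is again a function bounded by $\lambda$, so $\lambda\lfloor\,\cdot/\lambda\rfloor$ is identically zero and peeling produces no characteristic functions; each iterate leaves behind a bounded function of the same type, and the atomic parts of the iterates again only give unweighted $L^1+L^\infty$ smallness, which brings you back to gap one. In short, the $(\varepsilon,\delta)$-atomic mechanism disposes of mean-zero pieces in $L^1(\mathbb R^n)+L^\infty(\mathbb R^n)$, but gives no handle on the $u$-measure of the level sets of $S$ applied to a non-characteristic bounded function; that is exactly what this theorem has to deliver, and the CZ decomposition as you have set it up does not deliver it. (A minor additional point: for small $\lambda$ the CZ cubes at height $\lambda$ can be arbitrarily large, so the $b_i$ need not be $\delta$-atoms; the ``scaling/truncation'' remark would need to be made precise.) The paper itself cites this result from the reference \cite{cgs:cgs} without reproducing the proof, so a direct comparison is not possible, but the argument you give does not establish the statement.
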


\bigskip

\subsection{A Sawyer-type inequality}
Here we will study one of the often-called Sawyer-type inequalities for weights belonging in the restricted class of weights $\hat A_p$. First, to do so, we need the following result.

\begin{lemma}\label{lemma:difi_prev}
Let $1<p<\infty$ and $v\in \widehat A_p$. Take $\frac 1{p'} < \theta \leq 1$ and set $u_0 = (Mh)^{\frac{(p - 1)(1 - \theta)}\theta}$. Then, \begin{equation}\label{eq:lemma_difi_prev}
    M_{u_0}:L^{\frac{\theta p'}{\theta p' - 1}, 1}(v) \longrightarrow L^{\frac{\theta p'}{\theta p' - 1},\infty}(v),
\end{equation} with constant less than or equal to  $$\frac{\theta^2 p' C_{n,p}}{1 - p(1 - \theta)} \norm{v}_{\hat A_p}^{\frac{2(\theta p' - 1)}{\theta(p'-1)}},$$ and where $$M_{u_0}f(x) = \sup_{Q \ni x} \frac 1{u_0(Q)} \int_Q |f(y)| u_0(y) \,dy, \qquad x \in \mathbb R^n.$$  
\end{lemma}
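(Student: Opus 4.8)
The plan is to recognize $M_{u_0}$ as the Hardy--Littlewood maximal operator $M_\mu$ associated with the measure $d\mu:=u_0\,dx$, and to deduce \eqref{eq:lemma_difi_prev} from a Sawyer/Kerman--Torchinsky-type testing inequality relating $u_0$ and $v$. First I would do the bookkeeping: set $q:=\frac{\theta p'}{\theta p'-1}$, so that $q>1$, $q'=\theta p'$ and $q'-1=q'/q=\theta p'-1$; the hypothesis $\theta>1/p'$ is exactly what makes $\gamma:=\frac{(p-1)(1-\theta)}{\theta}$ lie in $[0,1)$ (indeed $1-\gamma=\frac{1-p(1-\theta)}{\theta}$), so by the Coifman--Rochberg estimate, i.e.\ the case $u\equiv1$ of \eqref{eq:MfumuA1}, $u_0=(Mh)^{\gamma}\in A_1$ with $\Vert u_0\Vert_{A_1}\lesssim\frac1{1-\gamma}=\frac{\theta}{1-p(1-\theta)}$. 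In particular $\mu$ is doubling with constant $\lesssim2^n\Vert u_0\Vert_{A_1}$, so the standard covering/Calder\'on--Zygmund machinery for $\mu$ applies. I would also fix a representation $v=(Mh)^{1-p}u$ with $u\in A_1$ and $\Vert u\Vert_{A_1}^{1/p}$ as close as desired to $\norm{v}_{\hat A_p}$.

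Writing $w:=v/u_0$ one has $w\,d\mu=v\,dx$, hence $L^{r,1}(v)=L^{r,1}(w\,d\mu)$ and $L^{r,\infty}(v)=L^{r,\infty}(w\,d\mu)$ for all $r$; thus \eqref{eq:lemma_difi_prev} is equivalent to $M_\mu:L^{q,1}(w\,d\mu)\to L^{q,\infty}(w\,d\mu)$. By the characterisation of restricted weak type $(q,q)$ for a maximal operator with respect to a doubling measure (see \cite{kt:kt,chk:chk}; equivalently, $v/u_0$ belongs to the restricted class $\hat A_q(\mu)$), this reduces to producing $A<\infty$ such that, for every cube $Q$ and every measurable $E\subseteq Q$,
\[
\Big(\frac{u_0(E)}{u_0(Q)}\Big)^{q}\le A\,\frac{v(E)}{v(Q)},
\]
after which the norm of $M_{u_0}$ is $\lesssim(2^n\Vert u_0\Vert_{A_1})^{O(1)}A^{1/q}$. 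To find such an $A$ I would apply H\"older with exponents $q'$ and $q$ to $u_0(E)=\int_E(Mh)^{\gamma}$; since the exponents conspire so that $(Mh)^{\gamma q'}v^{1-q'}=(Mh)\,u^{1-q'}$ (one checks the exponent of $Mh$ is $\gamma q'+(1-p)(1-q')=(p-1)(p'-1)=1$), the task collapses to bounding
\[
A=\sup_{Q}\Big(\tfrac1{|Q|}\int_Q(Mh)\,u^{1-q'}\Big)^{q-1}\Big(\tfrac1{|Q|}\int_Q v\Big)\Big(\tfrac1{|Q|}\int_Q u_0\Big)^{-q}.
\]

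To estimate $A$, on each cube $Q$ I would split $h=h\chi_{3Q}+h\chi_{(3Q)^c}$, so that on $Q$ one has $Mh\approx g_Q+c_Q$ with $g_Q:=M(h\chi_{3Q})$ and $c_Q$ the essentially constant value of $M(h\chi_{(3Q)^c})$ on $Q$. The key cancellation is that $c_Q$ drops out of $A$: it enters the three averages with total exponent $(q-1)+(1-p)-\gamma q=q(1-\gamma)-p$, which vanishes thanks to the identity $q(1-\gamma)=p$ (equivalently $\theta p'-1=\frac{1-p(1-\theta)}{p-1}$). What is left is the local part $g_Q=M(h\chi_{3Q})$, to be controlled by the weak $(1,1)$/Kolmogorov inequality for $M$, the Fefferman--Stein inequality, and the $A_1$ condition on $u$; in particular $\essinf_Q u\le u$ a.e.\ on $Q$ gives $\big(\tfrac1{|Q|}\int_Q u^{1-q'}\big)^{q-1}\essinf_Q u\le1$ because $(q-1)(q'-1)=1$. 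Collecting the powers of $\Vert u_0\Vert_{A_1}$, of $\frac1{1-\gamma}$ and of $\Vert u\Vert_{A_1}\approx\norm{v}_{\hat A_p}^p$ is then expected to yield exactly $\frac{\theta^2p'C_{n,p}}{1-p(1-\theta)}\norm{v}_{\hat A_p}^{\frac{2(\theta p'-1)}{\theta(p'-1)}}$, the factor $\theta^2p'/(1-p(1-\theta))$ coming from $\Vert u_0\Vert_{A_1}$ and the doubling constant of $\mu$, and the power of $\norm{v}_{\hat A_p}$ from the $A_1$ constant of $u$ carried through the H\"older and reverse-H\"older steps.

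The step I expect to be the main obstacle is the local estimate of $\frac1{|Q|}\int_QM(h\chi_{3Q})\,u^{1-q'}$ with constants uniform in $Q$: since $M(h\chi_{3Q})$ is only of weak type $(1,1)$, this average need not be finite for a general $h\in L^1_{\loc}$, so I would first prove the inequality for $h$ bounded with compact support (where all quantities are finite with $h$-independent bounds) and then recover the general case by the monotone approximation $h_R=\min(|h|,R)\chi_{B_R}\uparrow|h|$, $u_0^{(R)}=(Mh_R)^{\gamma}\uparrow u_0$, together with Fatou's lemma. A secondary point is to make sure the restricted weak type characterisation invoked above holds, with the correct dependence on the doubling constant, for the maximal operator of $\mu$ rather than of Lebesgue measure; this is routine (the sufficiency being a Calder\'on--Zygmund decomposition adapted to $\mu$), but must be stated carefully to track the final constant.
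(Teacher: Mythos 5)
Your route is genuinely different from the paper's. The paper cites two black boxes: it notes that $v$ is doubling with constant $\Delta_v\lesssim\|v\|_{\hat A_p}^p$, invokes \cite[Lemma~2.2(i)]{cs:cs} to reduce \eqref{eq:lemma_difi_prev} to the testing quantity $\sup_{E\subseteq Q}\frac{u_0(E)}{u_0(Q)}\left(\frac{v(Q)}{v(E)}\right)^{1/q}$ with $q=\frac{\theta p'}{\theta p'-1}$ (this is exactly your condition raised to the power $1/q$), then bounds that supremum by combining the restricted $A_p$-type inequality $\left(\frac{|E|}{|Q|}\right)^p\frac{v(Q)}{v(E)}\lesssim\|v\|_{\hat A_p}^p$ with \cite[Lemma~2.5]{cs:cs} for the power weight $(Mh)^\gamma$. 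You instead propose a self-contained argument: fix a representation $v=(Mh)^{1-p}u$, apply H\"older with exponents $q',q$ to collapse the testing constant to averages of $(Mh)u^{1-q'}$, $v$ and $u_0$ over $Q$, and use the exponent identity $q(1-\gamma)=p$ so that the nonlocal constant $c_Q$ cancels after splitting $h=h\chi_{3Q}+h\chi_{(3Q)^c}$. Your exponent bookkeeping is correct, and because you use the doubling of $u_0\,dx$ (whose constant depends only on $p,\theta$) rather than of $v\,dx$, your scheme would in fact yield a slightly \emph{better} power, $\|v\|_{\hat A_p}^{(\theta p'-1)/(\theta(p'-1))}$ instead of its square; since the lemma only claims an upper bound, that is harmless.

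That said, two steps in your plan are only sketched, and one of them is exactly what \cite[Lemma~2.2(i)]{cs:cs} hides. (a) You must actually establish, with an explicit constant, that the Kerman--Torchinsky/Chung--Hunt--Kurtz testing condition implies restricted weak type for $M_{u_0}=M_\mu$, $\mu=u_0\,dx$, on $L^{q,1}(w\,d\mu)\to L^{q,\infty}(w\,d\mu)$; this is routine for doubling $\mu$ but needs to be written (Besicovitch or an adapted Calder\'on--Zygmund decomposition), since the final constant enters the statement. (b) The local term $\frac1{|Q|}\int_QM(h\chi_{3Q})u^{1-q'}$ must be controlled: the cleanest finish is to bound $u^{1-q'}\le(\essinf_Qu)^{1-q'}$ on $Q$ (so no genuine Fefferman--Stein is needed, only the weak $(1,1)$ bound $\frac1{|Q|}\int_QM(h\chi_{3Q})\lesssim\frac1{|Q|}\int_{3Q}|h|$ together with $Mh\gtrsim\frac1{|3Q|}\int_{3Q}|h|$ on $Q$) and then use $(\essinf_Qu)^{(1-q')(q-1)}\cdot\frac1{|Q|}\int_Qu\le\|u\|_{A_1}$, which is where $(q-1)(q'-1)=1$ enters. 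Your proposed truncation $h_R=\min(|h|,R)\chi_{B_R}\uparrow|h|$ together with Fatou is the right way to justify the manipulations for general $h\in L^1_{\loc}$. So this is a viable alternative, but as written it is a plan with the two key estimates left open.
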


\begin{proof}
Observe that since $v \in \hat A_p$, then $v$ is a doubling weight with constant $\Delta_v \leq C_1\norm{v}_{\hat A_p}^p$. Therefore, according to \cite[Lemma 2.2 (i)]{cs:cs}, \eqref{eq:lemma_difi_prev} is bounded with constant less than or equal to $$C_1\norm{v}_{\hat A_p}^{\frac{\theta p' - 1}{\theta (p'- 1)}}\theta p'\left[ \sup_{E\subseteq Q} \frac{u_0(E)}{u_0(Q)}\left(\frac{v(Q)}{v(E)}\right)^\frac{\theta p' - 1}{\theta p'} \right],$$ where the supremum is taken over all cubes $Q$ and all measurable sets $E \subseteq Q$. 

Now, given a cube $Q$ and a measurable set $E \subseteq Q$, 
\begin{align*}
   \left(\frac{v(Q)}{v(E)}\right)^\frac{\theta p' - 1}{\theta p'} &= \left(\frac{|Q|}{|E|}\right)^{\frac{\theta p' - 1}{\theta(p' - 1)}} \left[\left(\frac{|E|}{|Q|}\right)^p\frac{v(Q)}{v(E)}\right]^\frac{\theta p' - 1}{\theta p'} \\ &\leq C_2 \norm{v}_{\hat A_p}^{\frac{\theta p' - 1}{\theta(p' - 1)}} \left(\frac{|Q|}{|E|}\right)^{\frac{\theta p' - 1}{\theta(p' - 1)}}
\end{align*}
and, as well, due to \cite[Lemma 2.5]{cs:cs}, \begin{align*}
    \sup_{E\subseteq Q }\frac{u_0(E)}{u_0(Q)}\left(\frac{|Q|}{|E|}\right)^{\frac{\theta p' - 1}{\theta(p' - 1)}} \leq \frac{ \theta C_3}{1 - p(1 - \theta)},
\end{align*}
which yields the desired result.
\end{proof}

The following lemma was proved for the case $\mu=1$ in \cite[Lemma 2.6]{cs:cs}, and the extension to other $\mu$'s has been fundamental for our purposes.  
\begin{lemma}\label{difi} 

\noindent Let $1<p<\infty$ and let $v= (Mh)^{1-p} u\in \widehat A_p$.  Take $\theta$ and $\mu$ so that $\frac 1{p'}<\theta<\mu\le 1$ and set $v_\theta= (Mh)^{1-p} u^{\theta}$. Then, 
\begin{equation*}
\Big\Vert \frac{M_\mu(\chi_E v_\theta)}{v_\theta}\Big\Vert_{L^{p', \infty}(v)} \lesssim  C_{p, \theta, \mu}(u) v(E)^\frac 1{p'}, \qquad \forall E\subseteq \mathbb R^n, 
\end{equation*}
where $M_\mu f := M(|f|^{1/\mu})^\mu$ and
 \begin{equation}\label{C_pthetamu(u)} 
C_{p, \theta, \mu}(u) = \bigg(\frac {p^2}{(p-1)^2(\mu-\theta) (\theta-\frac 1{p'})^2}\bigg)^{\theta}\Vert u\Vert_{ A_1}^{2\theta - \frac 2{p'}}. 
 \end{equation}
 
\end{lemma}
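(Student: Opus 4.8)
The plan is to reduce the estimate for $M_\mu(\chi_E v_\theta)/v_\theta$ to the auxiliary maximal inequality of Lemma~\ref{lemma:difi_prev} by first passing from $M_\mu$ to a weighted maximal operator $M_{u_0}$ attached to the weight $u_0=(Mh)^{(p-1)(1-\theta)/\theta}$ that appears there. The first step is to rewrite $v_\theta$ in a form adapted to $u_0$: since $v_\theta=(Mh)^{1-p}u^\theta$ and $v=(Mh)^{1-p}u$, one checks the algebraic identity $v_\theta = u_0 \cdot w$ for a suitable power $w$ of $v$ (indeed $v_\theta/v = (u/u_0^{?})\cdots$, i.e. $v_\theta = (Mh)^{(1-p)} u^\theta$ while $u_0 v^{s} = (Mh)^{(p-1)(1-\theta)/\theta + (1-p)s} u^{s}$ for the right exponent $s$), so that $M_\mu(\chi_E v_\theta)$ can be compared pointwise to $u_0(Q)^{-1}\int_Q \chi_E v_\theta\, u_0^{-1}\cdot u_0$ on cubes, i.e. to $u_0(Q)\cdot M_{u_0}(\chi_E v_\theta u_0^{-1})$. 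I would then absorb the power of $Mh$ coming from $M_\mu$ (which is $M$ applied to $(\cdot)^{1/\mu}$ and raised to the power $\mu$) using that $Mh$ is itself (essentially) an $A_1$ weight, together with fact~ii) of the $A_1$ subsection (inequality~\eqref{eq:MfumuA1}) and the doubling of $v$; the condition $\theta<\mu$ is exactly what makes the relevant exponent $(1-\theta)/(\mu-\theta)$-type quantities finite and produces the factor $(\mu-\theta)^{-\theta}$ in $C_{p,\theta,\mu}(u)$.

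Second, having arranged $\big\Vert M_\mu(\chi_E v_\theta)/v_\theta\big\Vert_{L^{p',\infty}(v)} \lesssim \big\Vert M_{u_0}(\chi_E v_\theta u_0^{-1})\big\Vert_{L^{q,\infty}(v)}$ for the exponent $q=\theta p'/(\theta p'-1)$ dictated by Lemma~\ref{lemma:difi_prev} (note $\theta>1/p'$ guarantees $q<\infty$ and $q$ is conjugate-adjacent to $p'$), I would invoke that lemma to bound $M_{u_0}:L^{q,1}(v)\to L^{q,\infty}(v)$ with the stated constant $\frac{\theta^2 p' C_{n,p}}{1-p(1-\theta)}\Vert v\Vert_{\hat A_p}^{2(\theta p'-1)/(\theta(p'-1))}$. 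It then remains to estimate $\Vert \chi_E v_\theta u_0^{-1}\Vert_{L^{q,1}(v)}$; since the argument is (a power of) $v$ times $\chi_E$, the Lorentz $L^{q,1}(v)$ quasi-norm of such a function is, up to the exponent bookkeeping, comparable to $v(E)^{1/p'}$ times a power of $\Vert v\Vert_{\hat A_p}$, because $\chi_E v_\theta u_0^{-1} = \chi_E \cdot (\text{power of } v)$ and one can compute $\int_0^\infty \lambda^v(\,\cdot\,)^{1/q}$ explicitly for a function of the form $\chi_E\, v^{a}$.

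Third and last, I would collect all the powers of $\Vert v\Vert_{\hat A_p}$ — from Lemma~\ref{lemma:difi_prev}, from the doubling constant $\Delta_v\lesssim \Vert v\Vert_{\hat A_p}^p$, and from the $L^{q,1}(v)$ computation — and convert them to powers of $\Vert u\Vert_{A_1}$ using $\Vert v\Vert_{\hat A_p}\le \Vert u\Vert_{A_1}^{1/p}$ (from the definition of the $\widehat A_p$ norm, taking the representation $v=(Mh)^{1-p}u$). Tracking the exponents should yield the power $2\theta-2/p'$ and the constant $\big(p^2/((p-1)^2(\mu-\theta)(\theta-\tfrac1{p'})^2)\big)^\theta$ asserted in \eqref{C_pthetamu(u)}: the $(\theta-1/p')^{-2\theta}$ comes from squaring and $\theta$-th-powering the $1/(\theta p'-1)$-type factors, the $(p-1)^{-2\theta}$ and $p^{2\theta}$ from the $1-p(1-\theta)$ and $\theta p'$ factors rewritten via $\theta>1/p'$, and $(\mu-\theta)^{-\theta}$ from the $M_\mu$-to-$M_{u_0}$ reduction. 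The main obstacle I anticipate is precisely this exponent arithmetic: making the reduction in the first step produce the weighted maximal operator $M_{u_0}$ with the \emph{exact} auxiliary parameter of Lemma~\ref{lemma:difi_prev}, and then ensuring every stray power of $Mh$ is nonpositive (so it can be dropped after using $v$'s doubling) rather than accidentally landing on the wrong side — this is where the strict inequalities $\tfrac1{p'}<\theta<\mu\le1$ must all be used, and a sign error there would break the whole scheme.
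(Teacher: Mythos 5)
Your plan has a genuine gap at the very first step, which then propagates. You propose to reduce $M_\mu(\chi_E v_\theta)$ to $M_{u_0}$ of an $E$-supported function by a pointwise comparison on cubes. But $M_\mu g = M\bigl(|g|^{1/\mu}\bigr)^\mu$ is not a linear average: the inner $1/\mu$-power and outer $\mu$-power do not commute with inserting and removing the weight $u_0$, so there is no pointwise identity of the form $M_\mu(\chi_E v_\theta)(x) \approx u_0(Q)\,M_{u_0}(\chi_E v_\theta u_0^{-1})(x)$. Relatedly, your final bookkeeping claims $\chi_E v_\theta u_0^{-1}$ is $\chi_E$ times a power of $v$; it is not. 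One computes $v_\theta u_0^{-1} = (Mh)^{(1-p)/\theta} u^\theta$, whereas $v^{s} = (Mh)^{(1-p)s}u^s$ — matching both exponents forces $s = 1/\theta$ and $s = \theta$, i.e.\ $\theta = 1$. So for $\theta<1$ the $L^{q,1}(v)$ quantity you intend to ``compute explicitly'' does not collapse to a power of $v(E)$.

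The paper's proof avoids the pointwise reduction entirely and goes through a dual, global mechanism. First, Kolmogorov's inequality with $r'=1/\theta < p'$ converts the $L^{p',\infty}(v)$ bound into a family of $L^{r'}$ estimates over level sets $F$; the useful algebraic identity is $v_\theta^{-r'}v = u_0$ (this is where $u_0=(Mh)^{(p-1)(1-\theta)/\theta}$ appears, not via a cube-by-cube rewriting). Then the weighted Fefferman--Stein inequality $\int (M\phi)^q w \lesssim \tfrac{q}{q-1}\int |\phi|^q\, Mw$, applied with $q=\mu r' = \mu/\theta>1$, moves the maximal operator off of $(\chi_E v_\theta)^{1/\mu}$ and onto $\chi_F u_0$; this is exactly where the factor $\tfrac{q}{q-1}=\tfrac{\mu}{\mu-\theta}$ — and hence $(\mu-\theta)^{-\theta}$ — enters, and it also swaps the roles of $E$ and $F$. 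Only now does one get a maximal operator acting on a characteristic function, namely $M(\chi_F u_0)\le \|u_0\|_{A_1}u_0\,M_{u_0}(\chi_F)$, and the target becomes $\int_E M_{u_0}(\chi_F)\,v\,dx$. A second Kolmogorov reduction then hands this to Lemma~\ref{lemma:difi_prev}, where the $L^{q,1}(v)$ input is genuinely a characteristic function $\chi_F$, so the Lorentz computation is trivial. Your scheme misses both the Fefferman--Stein duality (essential to remove the $\mu$-power) and the $E\leftrightarrow F$ swap, so it cannot be repaired by exponent bookkeeping alone.
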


\begin{proof} Observe that in virtue of the Kolmogorov's inequality \cite{gM:gM} with $1<r'=\frac 1\theta<p'$, it is enough to prove that
 \begin{align*}
 &\sup_{F\subseteq\mathbb R^n} \frac 1{v(F)^{\frac 1{r'}-\frac1{p'}}}\bigg(\int_F  (Mh(x))^{(p-1)(r'-1)} \big(M_\mu(\chi_E (Mh)^{1-p}u^\theta)(x)\big)^{r'} dx\bigg)^{\frac 1{r'}}
\\
\lesssim &  \,C_{p, \theta, \mu}(u) v(E)^{\frac 1{p'}}.
 \end{align*}

Then, using the Fefferman-Stein's inequality \cite{fs:fs}, since $\mu r'>1$, we obtain that 
\begin{align*}
&\int_F  (Mh(x))^{(p-1)(r'-1)} \big(M_\mu(\chi_E (Mh)^{1-p}u^\theta)(x)\big)^{r'} dx \\
\lesssim & \frac {\mu r'}{\mu r'-1} \int_E (Mh(x))^{(1-p)r'} 
M(\chi_F (Mh)^{(p-1)(r'-1)})(x)  u(x)dx.
\end{align*}

Now, since $u_0 = (Mh)^{(p-1)(r'-1)} \in A_1$, we have that, for every $x \in E$ and every cube $Q \ni x$ in $\mathbb R^n$, \begin{equation}\label{lemmamutheta_eq1}
\begin{split}
    \frac 1{|Q|} \int_Q \chi_F u_0(y)\,dy &\leq \frac{u_0(Q)}{|Q|} M_{u_0}(\chi_F)(x) \leq ||u_0||_{A_1}u_0(x)M_{u_0}(\chi_F)(x) \\ &\lesssim \frac 1{1 - (p-1)(r' - 1)}u_0(x)M_{u_0}(\chi_F)(x),
\end{split}
\end{equation} where in the last estimate we have used \eqref{eq:MfumuA1}. Hence, taking the supremum over all cubes $Q \in \mathbb R^n$ such that $Q \ni x$ in \eqref{lemmamutheta_eq1}, with $x \in E$, we deduce that \begin{align*}
    &\int_E (Mh(x))^{(1-p)r'} 
M(\chi_F (Mh)^{(p-1)(r'-1)})(x)  u(x)dx \\ \lesssim & \frac 1{1 - (p-1)(r' - 1)} \int_E M_{u_0}(\chi_F)(x)v(x)\,dx.
\end{align*}

Therefore, since $r' = \frac 1\theta$, the inequality we want to prove will hold if we see that \begin{align*}
    \sup_{E \subseteq \mathbb R^n} &\frac 1{v(E)^{\frac 1{p'}}}\left(\int_E M_{u_0}(\chi_F)(x)v(x)\,dx\right)^\theta \\ &\lesssim \left( \frac{(\mu - \theta)(1 - p(1 - \theta))}{\mu \theta} \right)^\theta C_{p,\theta, \mu}(u) v(F)^{\theta - \frac 1{p'}}
\end{align*} or equivalently, \begin{equation}\label{lemmamutheta_eq2}
\begin{split}
    \sup_{E \subseteq \mathbb R^n} &\frac 1{v(E)^{1 - \left(1 - \frac 1{\theta p'}\right)}}\int_E M_{u_0}(\chi_F)(x)v(x)\,dx \\ &\lesssim \left( \frac{(\mu - \theta)(1 - p(1 - \theta))}{\mu \theta} \right) C_{p,\theta, \mu}(u)^\frac 1\theta v(F)^{1 - \frac 1{\theta p'}}.
\end{split}
\end{equation}

Finally, using again the Kolmogorov's inequality in \eqref{lemmamutheta_eq2}, it is enough to prove that \begin{equation*}
    M_{u_0}:L^{\frac{\theta p'}{\theta p' - 1}, 1}(v) \longrightarrow L^{\frac{\theta p'}{\theta p'- 1}, \infty}(v)
\end{equation*} with constant less than or equal to $$\frac {c_{n,p}}{\theta p'}\left( \frac{(\mu - \theta)(1 - p(1 - \theta))}{\mu \theta} \right) C_{p,\theta, \mu}(u)^\frac 1\theta.$$ 

According to Lemma~\ref{lemma:difi_prev}, this will happen if $$C_{p,\theta, \mu}(u) \gtrsim \left(\frac{p^2}{(p - 1)^2(\mu - \theta)(1 - p(1-\theta))^2}\right)^\theta \norm{u}_{A_1}^{\frac{2(\theta p' - 1)}{ p'}},$$ from which the desired result follows by taking $C_{p,\theta,\mu}(u)$ as in \eqref{C_pthetamu(u)}.

\end{proof}

\section{Proof of the main result}\label{sec:main}

We are now ready to prove our main result:

\begin{proof}[Proof of  Theorem \ref{thrm:principal_extrap_result}] Let $h \in L^1_{\text{loc}}(\mathbb R^n)$ and $u \in A_1$ so that $v = (Mh)^{1-p} u\in \widehat A_p$. Further, let us take
$$ \frac 1{p'}<\theta <1, \qquad \mu:= 1- \frac{1-\theta}t \qquad \text{ and } \qquad v_\theta:= (Mh)^{1-p} u^\theta,
$$
where $t = 1 + \frac 1{2^{n + 1}\lVert u \rVert_{A_1}}$ satisfies $u^t\in A_1$ and $||u^t||_{A_1} \lesssim ||u||_{A_1}$ (see \eqref{eq:u^t_A1}). Then, $\theta < \mu < 1$ and, by \eqref{eq:MfumuA1}, for every measurable set $F \subseteq \mathbb R^n$, $$
u_0 = M_\mu( \chi_F v_\theta) u^{1-\theta} =M( \chi_F v_\theta^{1/\mu})^\mu (u^t)^{1-\mu} \in A_1, \qquad ||u_0||_{A_1}\leq \frac{C||u||_{A_1}}{1-\mu}.
$$

Let $y > 0$ and set $F=\{x : |g(x)|>y\}$ so that $v(F) = \lambda^v_g(y)$. We can assume, without lost of generality, that $v(F)<\infty$, since on the contrary we can take $g_N=g\chi_{B(0,N)}$ and let $N$ go to infinity at the end of our estimate. 

By hypothesis we obtain that
\begin{eqnarray*}
y\lambda_g^v(y) &=& y\int_{\{x \,:\,|g(x)|>y\}} v(x)\,dx \le y \int_F M_\mu( \chi_F v_\theta )(x) u(x)^{1-\theta}  dx
\\
&\leq&  \varphi \left(\frac{C||u||_{A_1}}{1-\mu}\right)\int_{\mathbb R^n} |f(x)| M_\mu( \chi_F v_\theta )(x) u(x)^{1-\theta}  dx
\\
&=&  \varphi \left(\frac{Ct||u||_{A_1}}{1-\theta}\right) \int_{\mathbb R^n} |f(x)| \frac{M_\mu( \chi_F v_\theta )(x)}{v_\theta(x)}  \, v(x)  \,  dx
\\
&\le &   \varphi \left(\frac{Ct||u||_{A_1}}{1-\theta}\right) \left \Vert \frac{M_\mu( \chi_F v_\theta) }{v_\theta}  \right \Vert_{L^{p',\infty}(v)}||f||_{L^{p,1}(v)},
\end{eqnarray*}

\noindent where in the last estimate we have used the H\"older's inequality for Lorentz spaces with respect to the measure $v(x)\,dx$. 

Now, by virtue of Lemma \ref{difi},

\begin{eqnarray*}
\left \Vert \frac{M_\mu( \chi_F v_\theta) }{v_\theta}  \right \Vert_{L^{p',\infty}(v)} &\lesssim &  C_{p, \theta, \mu}(u) v(F)^\frac 1{p'} = C_{p, \theta, \mu}(u) \lambda^v_g(y)^\frac 1{p'},
\end{eqnarray*}

\noindent so taking the supremum over all $y > 0$, in particular, we obtain that $$ \lVert g \rVert_{L^{p,\infty}(v)} \lesssim C_{p, \theta, \mu}(u) \varphi\left(\frac{Ct||u||_{A_1}}{1-\theta}\right)  ||f||_{L^{p,1}(v)}.$$

Finally, concerning about the constant $C_{p,\theta, \mu}(u)$, we observe that
\begin{align*}
    C_{p, \theta, \mu}(u) & = \bigg(\frac {p^2}{(p-1)^2(\mu-\theta) (\theta-\frac 1{p'})^2}\bigg)^{\theta}\Vert u\Vert_{ A_1}^{2\theta - \frac 2{p'}} \\ & \approx\left( \frac {p^2}{(p-1)^2(1 - \theta)(\theta - \frac 1{p'})^2} \right)^\theta \lVert u \rVert_{A_1}^{3\theta - \frac2{p'}}. 
\end{align*}

\noindent Therefore, letting $$\theta = \frac 1{p'}\left(1 + \frac 1{(p + 1)R}\right), \qquad 1 \leq R < \infty,$$  then \begin{align*}
    C_{p, \theta, \mu}(u) &\lesssim \left( \frac {p^5(p+1)^3R^2}{(p-1)^4} \right)^{\frac 1{p'}\left(1 + \frac 1{(p + 1)R}\right)} \norm{u}_{A_1}^{\frac 1{p'}} \norm{u}_{A_1}^{\frac 3{Rp'(p+1)}} \lesssim R^{\frac 2{p'}} \norm{u}_{A_1}^{\frac 1{p'}} \norm{u}_{A_1}^{\frac 3R}.  
\end{align*}

\noindent Furthermore, with the same choice of $\theta$, $$\varphi \left(\frac{Ct ||u||_{A_1}}{1-\theta}\right)  \leq \varphi \left(\tilde C ||u||_{A_1}\right). $$

Thus, the result  follows by setting $R = 1 + \log \lVert u \rVert_{A_1}$ and then taking the infimum on $\lVert u \rVert_{A_1}$ over all  possible representations of $v \in \widehat A_p$. 
\end{proof}

\section{Examples and applications to average operators, multipliers and integral operators}\label{sec:examples}
  
\subsection{Examples}
  
There are many operators in harmonic analysis for which the weak-type $(1,\,1)$ boundedness for every weight in $A_1$ has been proved \cite{cgs:cgs,hp:hp,kw:kw,lop:lop,lprr:lprr,v:v}.
  
As a consequence of the classical Rubio de Francia extrapolation theory (see Theorem~\ref{thrm:classical_rubio_de_Francia}) it is known that they are also bounded on $L^p(v)$ for every $v\in A_p$; but, in general, the restricted weak-type 
$$
T:L^{p,1}(v) \longrightarrow L^{p,\infty}(v), \qquad \forall v\in\widehat A_p,
$$
has been unknown up to now for many examples.  This is the case, for instance, of the Bochner-Riesz operator at the critical index $B_{\frac{n-1}2}$, introduced by S. Bochner in \cite{b:b2} and defined as follows (see \cite{cd:cd} for some partial results in this context): 
let $a_{+} = \max\{a,0\}$ denote the positive part of $a \in \mathbb R$ and
given $\lambda > 0$, the Bochner-Riesz operator $B_{\lambda}$ on $\mathbb R^n$ is defined by 
$$
\widehat{B_{\lambda}f}(\xi) = \left( 1 - |\xi|^2  \right)^{\lambda}_{+}\hat{f}(\xi), \qquad \xi \in \mathbb R^n.
$$

\begin{proposition}[\cite {lprr:lprr, v:v}]
For every $n > 1$, 
$$
B_{\frac{n-1}2}:L^1(u)\longrightarrow L^{1,\infty}(u),\qquad C||u||^2_{A_1} \log(||u||_{A_1} + 1), \qquad \forall u \in A_1.
$$ 
\end{proposition}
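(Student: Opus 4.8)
I would obtain the bound from a \emph{sparse domination} of $B_{(n-1)/2}$ adapted to the critical index, and then read off the weighted weak-type estimate from the weighted theory of sparse forms. Throughout, $\langle f\rangle_{1,Q}:=|Q|^{-1}\int_Q|f|$, and a \emph{$(1,1)$-sparse bound} for an operator $S$ means: to every pair $(f,g)$ one can attach a sparse family $\mathcal S$ of cubes with $|\langle Sf,g\rangle|\lesssim\sum_{Q\in\mathcal S}|Q|\,\langle f\rangle_{1,Q}\,\langle g\rangle_{1,Q}$.

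\emph{(1)--(2) Single-scale analysis.} Fix $\phi\in C_c^\infty(1/2,2)$ with $\sum_{j\ge1}\phi(2^js)=1$ for $0<s<1/2$ and write $(1-|\xi|^2)_+^{(n-1)/2}=m_0(\xi)+\sum_{j\ge1}m_j(\xi)$, where $m_j(\xi)=(1-|\xi|^2)_+^{(n-1)/2}\,\phi(2^j(1-|\xi|))$ and $m_0\in C_c^\infty(\mathbb R^n)$. On $\supp m_j$ one has $1-|\xi|\approx2^{-j}$, and stationary phase gives a kernel $K_{m_j}(x)=\operatorname{Re}\big(c\,2^{-j(n-1)/2}a_j(x)e^{i|x|}\big)$ with $a_j$ supported in $|x|\lesssim2^j$, $|\partial^\alpha a_j(x)|\lesssim_\alpha 2^{-j}(1+|x|)^{-(n-1)/2-|\alpha|}$; thus $\|K_{m_j}\|_{L^1}\lesssim1$ and, crucially, $K_{m_j}$ obeys the $L^1$-H\"ormander condition \emph{uniformly in} $j$ (the unit-scale oscillation $e^{i|x|}$ offsets the $2^j$-spread of the amplitude). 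I would then prove, with constant independent of $j$, a $(1,1)$-sparse bound for each $T_{m_j}$; this is where the curvature of $S^{n-1}$ is used, through the stationary-phase estimate for $\widehat{d\sigma}$, or equivalently by splitting $m_j$ into $\approx2^{j(n-1)/2}$ angular caps of width $2^{-j/2}$ and exploiting $L^2$-orthogonality among them. The smooth term $T_{m_0}$ is a Mihlin multiplier and has the same bound.

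\emph{(3)--(4) Summing the scales and passing to weights.} Because $\|T_{m_j}\|_{L^1\to L^{1,\infty}}\approx1$ uniformly in $j$, the series $\sum_j T_{m_j}$ is not $\ell^1$-summable in operator norm; one must instead add the single-scale sparse forms directly. Using the rapid decay of $a_j$ beyond $|x|\sim2^j$, the scales that ``see'' a given cube $Q$ contribute only logarithmically, producing a sparse bound for $B_{(n-1)/2}$ with a single logarithmic loss (a $\log$-enhanced sparse form). Such a form is weak-type $(1,1)$ unweighted, and feeding it into the weighted theory of sparse forms — as in the endpoint $A_1$-estimate for Calder\'on--Zygmund operators of Lerner--Ombrosi--P\'erez — yields
\[
\big\|B_{(n-1)/2}\big\|_{L^1(u)\to L^{1,\infty}(u)}\ \lesssim\ \|u\|_{A_1}^{2}\,\log\big(\|u\|_{A_1}+1\big),\qquad u\in A_1,
\]
the square of $\|u\|_{A_1}$ reflecting that the sparse form is $\log$-enhanced rather than plain $(1,1)$, and the remaining logarithm being the customary endpoint loss for such forms.

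\emph{Main obstacle.} The crux is step (2): the uniform-in-$j$ single-scale $(1,1)$-sparse bound. This is the hard analytic heart of the matter — essentially the (unweighted) endpoint weak-type $(1,1)$ theorem for $B_{(n-1)/2}$ of Christ and Seeger, carried out robustly enough to survive an $A_1$ weight. I would also note that, precisely because the single-scale pieces are of unit size in every relevant sense, the $\ell^1$-summation machinery developed in this paper does \emph{not} apply directly, and one is forced to sum the scales sparsely, as in step (3) — which is exactly what produces both the logarithm and the extra power of $\|u\|_{A_1}$ in the final estimate.
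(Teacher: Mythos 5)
The paper does not prove this proposition; it is quoted from \cite{lprr:lprr} and \cite{v:v}, so there is no in-paper argument to compare with. Your sketch should therefore be measured against what those references actually do, and the overall route you describe — dyadic decomposition of the multiplier along shells $1-|\xi|\approx 2^{-j}$, single-scale kernel estimates via stationary phase, and a sparse bound feeding into weighted theory — is indeed the framework of \cite{lprr:lprr}, which builds on the sparse form for rough operators of Conde-Alonso, Culiuc, Di Plinio and Ou and on the earlier weighted weak-type $(1,1)$ bounds of \cite{v:v}.

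The genuine gap is in steps (3)--(4), and it is not merely cosmetic. The rapid decay of $a_j$ for $|x|\gg 2^j$ does not cause the $j$-sum of single-scale $(1,1)$-sparse forms to collapse with a logarithmic loss: inside the effective support $|x|\lesssim 2^j$ all scales overlap, the attached sparse families vary with $j$, and the naive sum $\sum_j\sum_{Q\in\mathcal S_j}|Q|\langle f\rangle_{1,Q}\langle g\rangle_{1,Q}$ has no a priori bound. The mechanism that actually renders the scales summable is the $L^2$ decay $\lVert T_{m_j}\rVert_{L^2\to L^2}=\lVert m_j\rVert_\infty\approx 2^{-j(n-1)/2}$ (you mention the $L^2$ orthogonality of the caps but assign it the wrong role — it is not there to make the single-scale sparse bound work, it is there to make the scales summable). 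Combined with the uniform single-scale $L^1$/Calder\'on--Zygmund estimate, it yields a $(1,p)$-sparse form $|\langle B_{\frac{n-1}2}f,g\rangle|\lesssim p'\sum_{Q\in\mathcal S}|Q|\langle f\rangle_{1,Q}\langle g\rangle_{p,Q}$ for every $p>1$, with blow-up $p'$ as $p\to 1$, not a ``log-enhanced $(1,1)$-sparse form.'' It is this $(1,p)$ structure, optimized in $p$ and combined with a sharp reverse H\"older inequality, that produces the extra factor of $\lVert u\rVert_{A_1}$ beyond the Lerner--Ombrosi--P\'erez bound $\lVert u\rVert_{A_1}(1+\log\lVert u\rVert_{A_1})$ for a genuine $(1,1)$-sparse operator. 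As written, your step (4) asserts, without justification, that a ``log-enhanced'' $(1,1)$-sparse form implies the $\lVert u\rVert_{A_1}^2\log(\lVert u\rVert_{A_1}+1)$ bound; no such general implication is available, and the actual argument in \cite{lprr:lprr} takes a different path.
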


Thereby, in virtue of Theorem~\ref{thrm:principal_extrap_result}, we completely answer the open question formulated in \cite{cd:cd} about the restricted weak-type boundedness of $B_{\frac{n-1}2}$. 

  \begin{corollary}
 For every $n > 1$ and every $p > 1$,
$$
B_{\frac{n-1}2}:L^{p,1}(v) \longrightarrow L^{p,\infty}(v), \qquad C||v||^{3p - 1}_{\hat{A}_p}(1 + \log||v||_{\hat{A}_p})^{1 + \frac 2{p'}}, \qquad \forall v\in\widehat A_p.
$$ 
\end{corollary}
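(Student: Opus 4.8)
The plan is to combine the weak-type $(1,1)$ bound for $B_{\frac{n-1}2}$ on $A_1$ weights with Theorem~\ref{thrm:principal_extrap_result}. Applied to the pair $(f, g) = (f, B_{\frac{n-1}2}f)$, the cited Proposition gives
$$
\|B_{\frac{n-1}2}f\|_{L^{1,\infty}(u)} \le C\|u\|_{A_1}^2\log(\|u\|_{A_1}+1)\,\|f\|_{L^1(u)}, \qquad \forall u \in A_1,
$$
so the hypothesis of Theorem~\ref{thrm:principal_extrap_result} holds with the nondecreasing function $\varphi(r) = Cr^2\log(r+1)$. The theorem then yields, for every $1<p<\infty$,
$$
\|B_{\frac{n-1}2}f\|_{L^{p,\infty}(v)} \le \Phi(\|v\|_{\widehat A_p})\,\|f\|_{L^{p,1}(v)}, \qquad \forall v \in \widehat A_p,
$$
with $\Phi(r) = C_1\varphi(C_2r^p)r^{p-1}(1+\log r)^{2/p'}$.

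The remaining step is to simplify $\Phi$. Substituting $\varphi$ gives
$$
\Phi(r) \approx r^{2p}\bigl(\log(C_2 r^p + 1)\bigr)\, r^{p-1}(1+\log r)^{2/p'} \approx r^{3p-1}(1+\log r)^{1 + 2/p'},
$$
where one absorbs $\log(C_2 r^p+1) \approx 1 + \log r$ (for $r \ge 1$, up to a constant depending on $p$), and the powers of $r$ combine as $2p + (p-1) = 3p-1$. This gives exactly the claimed bound $C\|v\|_{\widehat A_p}^{3p-1}(1+\log\|v\|_{\widehat A_p})^{1 + 2/p'}$. One should take the infimum over representations $v = (Mh)^{1-p}u$, but since $\Phi$ is already expressed purely in terms of $\|v\|_{\widehat A_p}$ there is nothing further to do.

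I do not expect any genuine obstacle here: the proof is a direct application of the main theorem, and the only point requiring a small amount of care is the bookkeeping of exponents and the absorption of the logarithmic factor $\log(C_2 r^p + 1)$ into $1+\log r$, which is routine for $r \ge 1$. One should also note that $B_{\frac{n-1}2}$ is well defined on $L^{p,1}(v)$ (e.g.\ by density of nice functions and the a priori $L^p(v)$ bound from classical extrapolation), so that the pair $(f, B_{\frac{n-1}2}f)$ makes sense; this is implicit in the formulation and needs no separate argument.
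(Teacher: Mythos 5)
Your proof is correct and follows exactly the paper's approach: apply Theorem~\ref{thrm:principal_extrap_result} to the pair $(f,B_{\frac{n-1}2}f)$ with $\varphi(r)=Cr^2\log(r+1)$ from the cited Proposition, then substitute into $\Phi$ and absorb $\log(C_2r^p+1)$ into $1+\log r$ for $r\ge 1$ to obtain $\Phi(r)\approx r^{3p-1}(1+\log r)^{1+2/p'}$. The bookkeeping of exponents $2p+(p-1)=3p-1$ is right, and this is precisely how the paper derives the corollary from the main theorem.
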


Same estimates can be obtained for a large list of operators such as those appearing in \cite{bc:bc,cd:cd,cgs:cgs}: rough operators, H\"ormander multipliers, radial Fourier multipliers, square functions, etc.

\medskip

\subsection{\bf Average operators}

\medskip

\begin{corollary}\label{cor:operators_prob_meas} Assume that  $\{T_\theta\}_\theta$ is a family of operators indexed in a probability measure space such that the average operator 
$$
T_A f(x)= \int T_\theta f(x) dP(\theta), \qquad x \in \mathbb R^n,
$$
is well defined and that
\begin{equation}\label{eq:ttheta_A1_bound}
T_\theta:L^1(u)\longrightarrow L^{1, \infty}(u), \qquad \varphi(||u||_{A_1}), \qquad \forall u \in A_1,
\end{equation}
where $\varphi$ is a positive nondecreasing function on $[1,\infty)$. Then, 
\begin{equation}\label{rest}
T_A:L^1_{\mathcal R}(u) \longrightarrow L^{1, \infty}(u),\quad C_1\varphi(C_2||u||_{A_1})(1+\log ||u||_{A_1}),\quad \forall u \in A_1.
\end{equation}
Moreover, if $T_A$ is a sublinear $(\varepsilon, \delta)$-atomic approximable operator, then 
\begin{equation}\label{eq:TA_bounded_Lorentz_Sp_p=1}
T_A:L^1(u) \longrightarrow L^{1, \infty}(u),\qquad \tilde C_1\varphi(C_2||u||_{A_1})||u||_{A_1}(1+\log ||u||_{A_1}).
\end{equation}
\end{corollary}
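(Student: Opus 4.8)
The plan is to deduce Corollary~\ref{cor:operators_prob_meas} from Theorem~\ref{thrm:principal_extrap_result} by first transferring the uniform weighted weak-type bound on the individual $T_\theta$ to an integrated weighted weak-type bound on $T_A$ at a single exponent $p_0 > 1$, and then extrapolating. Concretely, fix $p_0 = 2$ (any exponent $>1$ works). Since $\varphi(\|u\|_{A_1})$ is independent of $\theta$, the classical Rubio de Francia theorem (Theorem~\ref{thrm:classical_rubio_de_Francia}) applied to the pair $(f, T_\theta f)$ — with $p_0 = 1$ there — gives, uniformly in $\theta$,
$$
\|T_\theta f\|_{L^2(v)} \le C_1 \varphi\big(C_2\|v\|_{A_2}\big)\,\|f\|_{L^2(v)}, \qquad \forall v \in A_2.
$$
Now $L^2(v)$ \emph{does} satisfy Minkowski's integral inequality, so integrating in $\theta$ against the probability measure $dP(\theta)$ yields
$$
\|T_A f\|_{L^2(v)} \le \int \|T_\theta f\|_{L^2(v)}\,dP(\theta) \le C_1 \varphi\big(C_2\|v\|_{A_2}\big)\,\|f\|_{L^2(v)}, \qquad \forall v \in A_2.
$$

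Next I would run the classical extrapolation machinery backwards and forwards to land back at the weak-type $(1,1)$ hypothesis of Theorem~\ref{thrm:principal_extrap_result}. The cleanest route: the $L^2(v)$ bound above, being of the form required by Theorem~\ref{thrm:classical_rubio_de_Francia} with $p_0 = 2$, implies (again by that theorem) an $L^p(v)$ bound for all $p > 1$; but what I actually need is a weighted weak-type $(1,1)$ estimate $\|T_A f\|_{L^{1,\infty}(u)} \le \varphi_1(\|u\|_{A_1})\|f\|_{L^1(u)}$ for $u \in A_1$. This is precisely the hypothesis of Theorem~\ref{thrm:principal_extrap_result}, and it is available directly from \eqref{eq:ttheta_A1_bound}: for $u \in A_1$,
$$
\|T_A f\|_{L^{1,\infty}(u)} \le \Big\| \int |T_\theta f|\,dP(\theta) \Big\|_{L^{1,\infty}(u)},
$$
but here the lack of Minkowski in $L^{1,\infty}$ is exactly the obstruction, so this naive bound is \emph{not} valid — and this is the whole point of the paper. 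Instead I would argue: the $L^2(v)$-boundedness of $T_A$ for all $v \in A_2$ is one half of the equivalence announced after Theorem~\ref{thrm:from_restricted_to_all}; combined with Theorem~\ref{thrm:first_extrap_result} (or rather its converse, which the authors assert), it gives $T_A : L^1_{\mathcal R}(u) \to L^{1,\infty}(u)$ for $u \in A_1$ — but to get the sharper logarithmic constant in \eqref{rest} one should feed the $p_0 = 2$ restricted-type bound into Theorem~\ref{thrm:principal_extrap_result}. So the actual chain is: uniform $A_1$ weak-type on $T_\theta$ $\Rightarrow$ (classical extrapolation, $\theta$-uniformly) uniform $L^{p_0}(v)$ bound on $T_\theta$ for $v \in A_{p_0}$ $\Rightarrow$ (Minkowski in $L^{p_0}$) $L^{p_0}(v)$ bound on $T_A$ $\Rightarrow$ (classical extrapolation again, now for the single operator $T_A$) a weak-type $(1,\infty)$-from-$L^1$ estimate is \emph{not} what one gets — so one instead applies Theorem~\ref{thrm:principal_extrap_result} in the form that takes an $L^{p_0,1}(v) \to L^{p_0,\infty}(v)$ hypothesis. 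Let me therefore restate the intended route precisely in the next paragraph.

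The intended route is: from \eqref{eq:ttheta_A1_bound} and Theorem~\ref{thrm:first_extrap_result}'s philosophy, the pair $(f, T_\theta f)$ satisfies the hypothesis of Theorem~\ref{thrm:principal_extrap_result} \emph{uniformly in $\theta$}, hence by that theorem, for a fixed $p_0 > 1$,
$$
\|T_\theta f\|_{L^{p_0,\infty}(v)} \le \Phi_{p_0}(\|v\|_{\widehat A_{p_0}})\,\|f\|_{L^{p_0,1}(v)}, \qquad \forall v \in \widehat A_{p_0},
$$
with $\Phi_{p_0}$ independent of $\theta$. Since for $p_0 > 1$ the space $L^{p_0,\infty}(v)$ is normable (equivalent to a Banach norm, via the $L^{p_0,1}$–$L^{p_0,\infty}$ duality or the standard averaging of the distribution function), Minkowski's integral inequality \emph{does} hold there up to a constant $c_{p_0}$, so integrating in $\theta$ gives
$$
\|T_A f\|_{L^{p_0,\infty}(v)} \le c_{p_0}\,\Phi_{p_0}(\|v\|_{\widehat A_{p_0}})\,\|f\|_{L^{p_0,1}(v)}, \qquad \forall v \in \widehat A_{p_0}.
$$
Now apply the converse direction (Theorem~\ref{thrm:first_extrap_result} read backwards, which the authors have asserted holds) to $T_A$: this restricted weak-type $(p_0,p_0)$ bound on $T_A$ for all $v \in \widehat A_{p_0}$ yields $T_A : L^1_{\mathcal R}(u) \to L^{1,\infty}(u)$ for $u \in A_1$, with constant $C\|u\|_{A_1}^{1 - 1/p_0}\varphi(C_2\|u\|_{A_1}^{1/p_0})$-type dependence from \eqref{eq:first_extrap_result}. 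Optimizing the choice of $p_0$ as a function of $\|u\|_{A_1}$ — exactly as in the proof of Theorem~\ref{thrm:principal_extrap_result}, where the parameter $\theta = \tfrac{1}{p'}(1 + \tfrac{1}{(p+1)R})$ is tuned with $R = 1 + \log\|u\|_{A_1}$ — collapses the power $\|u\|_{A_1}^{1-1/p_0}$ into $\varphi(C_2\|u\|_{A_1})$ times a logarithmic factor, producing the constant $C_1\varphi(C_2\|u\|_{A_1})(1+\log\|u\|_{A_1})$ in \eqref{rest}.

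For the final claim \eqref{eq:TA_bounded_Lorentz_Sp_p=1}, once \eqref{rest} is established and $T_A$ is assumed sublinear and $(\varepsilon,\delta)$-atomic approximable, I would simply invoke Theorem~\ref{thrm:from_restricted_to_all}: it upgrades $T_A : L^1_{\mathcal R}(u) \to L^{1,\infty}(u)$ with constant $C_u = C_1\varphi(C_2\|u\|_{A_1})(1+\log\|u\|_{A_1})$ to $T_A : L^1(u) \to L^{1,\infty}(u)$ with constant $2^n C_u \|u\|_{A_1}$, which is exactly the asserted bound $\tilde C_1 \varphi(C_2\|u\|_{A_1})\|u\|_{A_1}(1+\log\|u\|_{A_1})$. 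The main obstacle is the middle step: justifying that one may integrate the $\theta$-uniform restricted weak-type bound over the probability space — i.e., that Minkowski's inequality is available in $L^{p_0,\infty}(v)$ with a constant depending only on $p_0$ and not on $v$ — and, more delicately, verifying that the converse of Theorem~\ref{thrm:first_extrap_result} genuinely holds (the authors state this equivalence but it must be the engine here), together with tracking the weight constants carefully enough through the $p_0$-optimization to obtain the clean logarithmic dependence rather than a power of $\|u\|_{A_1}$. Everything else is bookkeeping with the constants already computed in the proof of Theorem~\ref{thrm:principal_extrap_result}.
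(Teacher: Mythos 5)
Your final route --- extrapolate each $T_\theta$ up to $L^{p,1}(v)\to L^{p,\infty}(v)$ uniformly in $\theta$ via Theorem~\ref{thrm:principal_extrap_result}, use normability of $L^{p,\infty}(v)$ (constant $\tfrac p{p-1}$) together with Minkowski's integral inequality to pass the bound to $T_A$, descend to $L^1_{\mathcal R}(u)\to L^{1,\infty}(u)$ via Theorem~\ref{thrm:first_extrap_result}, optimize over $p>1$, and finish with Theorem~\ref{thrm:from_restricted_to_all} --- is exactly the paper's proof, and the constant bookkeeping works out as you sketch. The only small correction: the step invoking Theorem~\ref{thrm:first_extrap_result} is not ``the converse read backwards'' but that theorem applied in its stated direction (restricted weak-type hypothesis at $p_0>1$, conclusion at the endpoint $p=1$); also, the opening detour through $L^2(v)$ and the subsequent self-corrections are unnecessary and should be deleted from a final write-up.
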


\begin{proof} Set $1 < p < \infty$. Using Theorem \ref{thrm:principal_extrap_result},  we have that \eqref{eq:ttheta_A1_bound}  implies
$$
T_\theta:L^{p,1}(v) \longrightarrow  L^{p, \infty}(v),\qquad \Phi(||v||_{\widehat A_p}), \qquad \forall v\in\widehat A_p.
$$
Now,   $L^{p,\infty}(v)$ is a Banach function space since there exists a norm $||\cdot||_{(p,\infty,v)}$ so that
$$
||f||_{L^{p,\infty}(v)}\le ||f||_{(p,\infty,v)} \le \frac p{p-1} ||f||_{L^{p,\infty}(v)}. 
$$
Hence, by the Minkowski's integral inequality, 
$T_A$ satisfies that for every $p > 1$,
\begin{equation*}
    T_A:L^{p,1}(v) \longrightarrow  L^{p, \infty}(v), \qquad \frac p{p-1}\Phi(||v||_{\widehat A_p}), \qquad \forall v\in\widehat A_p.
\end{equation*}
Therefore, using Theorem~\ref{thrm:first_extrap_result} the desired result \eqref{rest}  follows by taking the infimum in $p>1$. Finally,  \eqref{eq:TA_bounded_Lorentz_Sp_p=1} is just a consequence of Theorem~\ref{thrm:from_restricted_to_all}.
\end{proof}

In particular, 
the next result stated in the introduction follows:

\begin{proof}[Proof of Corollary~\ref{cor:average_cor_Tj_cj}]
This result is just a direct consequence of Corollary~\ref{cor:operators_prob_meas} since $\left\{\frac{c_j}{||c||_{\ell^1}}T_j\right\}_j$ is a family of operators indexed in the counting probability measure.
\end{proof}

\medskip

\noindent
{\bf (I) Fourier multipliers}

\medskip

\noindent Our next application is in the context of restriction multipliers from $\mathbb R^{n+k}$ to $\mathbb R^n$. First, let us recall that a  bounded function $m$ defined on $\mathbb R^n$ is said to be \emph{normalized} if \begin{equation}\label{eq:normalized_def_m}
    \lim_j \widehat {\psi_j} * m(x) = m(x), \qquad \forall x \in \mathbb R^n,
\end{equation} where for each $j$, $\psi_j(x)= \psi(x/j)$, and $\psi \in \mathcal C^\infty_c(\mathbb R^n)$ (i.e., $\psi$ is an infinitely differentiable function with compact support), $\hat \psi \geq 0$ and $||\hat \psi||_1 = 1$.

It is easy to see that then, for every Lebesgue point $x$ of $m$, \eqref{eq:normalized_def_m} holds. In particular, every continuous and bounded function is normalized.

\begin{proposition}
Let  $k \geq 1$ and assume that a normalized bounded function $m$ defined in $\mathbb R^{n+k}$ satisfies that $$T_m:L^1(u) \longrightarrow L^{1,\infty}(u), \qquad  \varphi(||u||_{A_1}), \qquad \forall u \in A_1(\mathbb R^{n + k}),
$$  
where $\varphi$ is a positive nondecreasing function on $[1,\infty)$. Let $\phi\in L^1(\mathbb R^k)$ and define
$$
m_\phi(x)= \int_{\mathbb R^k} m(x, y) \phi(y)\, dy, \qquad x \in \mathbb R^n.
$$ 
Then, for every $v \in A_1(\mathbb R^{n})$,
$$
T_{m_\phi}:L^1_{\mathcal R}(v)\longrightarrow L^{1, \infty}(v), \qquad C_1\varphi(C_2||v||_{A_1})||v||_{A_1} (1+\log ||v||_{A_1}). 
$$
\end{proposition}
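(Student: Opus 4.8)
The plan is to realize $T_{m_\phi}$ as an average operator over the probability space $\mathbb R^k$ equipped with the probability measure $d P(y) = \lvert\phi(y)\rvert\, dy / \norm{\phi}_1$ (assuming $\phi\not\equiv 0$, the trivial case being obvious), and then invoke Corollary~\ref{cor:operators_prob_meas}. First I would identify the right family of operators: for each $y\in\mathbb R^k$, let $m^y$ denote the slice $m^y(x) = m(x,y)$ on $\mathbb R^n$, and set $T_\theta = T_{m^y}$ with $\theta = y$. Since $m$ is a normalized bounded function on $\mathbb R^{n+k}$ and the slices are bounded functions on $\mathbb R^n$, each $T_{m^y}$ is a well-defined Fourier multiplier operator on, say, Schwartz functions. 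The formal identity is $\widehat{T_{m_\phi}f}(\xi) = m_\phi(\xi)\hat f(\xi) = \left(\int_{\mathbb R^k} m(\xi,y)\phi(y)\,dy\right)\hat f(\xi) = \int_{\mathbb R^k}\widehat{T_{m^y}f}(\xi)\,\phi(y)\,dy$, so that $T_{m_\phi}f(x) = \int_{\mathbb R^k} T_{m^y}f(x)\,\phi(y)\,dy$, i.e. $T_{m_\phi} = \norm{\phi}_1 \int T_\theta f\, dP(\theta)$ up to the normalization constant.

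The key step, and the one I expect to be the main obstacle, is the uniform weighted weak-type bound
$$
T_{m^y}:L^1(u)\longrightarrow L^{1,\infty}(u), \qquad \varphi(\norm{u}_{A_1}), \qquad \forall u\in A_1(\mathbb R^n),
$$
with a constant \emph{independent of} $y$, starting only from the hypothesis that the full multiplier $T_m$ on $\mathbb R^{n+k}$ satisfies this bound with weights in $A_1(\mathbb R^{n+k})$. The natural mechanism is a transference/de Leeuw-type argument: given $u\in A_1(\mathbb R^n)$, build the weight $\tilde u(x,z) = u(x)\,w(z)$ on $\mathbb R^{n+k}$ for a suitable fixed $w\in A_1(\mathbb R^k)$ (e.g. a power weight or a bump), check that $\tilde u\in A_1(\mathbb R^{n+k})$ with $\norm{\tilde u}_{A_1}\lesssim \norm{u}_{A_1}$, and then use that the restriction of the $(n+k)$-dimensional multiplier $m(\xi,\eta)$ to the slice $\eta\mapsto m(\xi,y)$ corresponds—via dilation and the normalization of $m$ (condition \eqref{eq:normalized_def_m}), which is exactly what guarantees that slices are recovered as pointwise limits of mollifications—to a genuine operator on $\mathbb R^n$ whose weighted weak-type constant is controlled by that of $T_m$. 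This is the standard "multipliers on $\mathbb R^{n+k}$ restrict to multipliers on $\mathbb R^n$" principle; normalization is precisely the hypothesis that makes the restriction to an individual slice legitimate rather than merely almost-everywhere defined, and the uniformity in $y$ comes from the fact that the transference constants do not see $y$.

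Once this uniform bound is in hand, the rest is immediate: apply Corollary~\ref{cor:operators_prob_meas} to the family $\{T_{m^y}\}_{y}$ over $(\mathbb R^k, dP)$, whose average operator is (a constant multiple of) $T_{m_\phi}$, to conclude
$$
T_{m_\phi}:L^1_{\mathcal R}(v)\longrightarrow L^{1,\infty}(v), \qquad C_1\varphi(C_2\norm{v}_{A_1})\norm{v}_{A_1}(1+\log\norm{v}_{A_1}), \qquad \forall v\in A_1(\mathbb R^n),
$$
absorbing $\norm{\phi}_1$ into the constant $C_1$; the factor $\norm{v}_{A_1}$ appears because $T_{m_\phi}$, being a convolution-type (sublinear, in fact linear) operator with a suitable kernel, is $(\e,\delta)$-atomic approximable, so we may pass from the restricted estimate \eqref{rest} to the full estimate \eqref{eq:TA_bounded_Lorentz_Sp_p=1} via Theorem~\ref{thrm:from_restricted_to_all}. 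I would double-check only that $T_A = T_{m_\phi}$ is genuinely well defined on the relevant dense class (Schwartz functions), which follows from $\phi\in L^1$ and boundedness of $m$, so that Corollary~\ref{cor:operators_prob_meas} applies verbatim.
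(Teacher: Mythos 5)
Your overall strategy — write $T_{m_\phi}$ as an average of slice multipliers $T_{m(\cdot,y)}$ over $(\mathbb R^k,\,\phi(y)\,dy)$ and feed the result into Corollary~\ref{cor:operators_prob_meas} — is exactly the paper's, and you correctly flag the uniform weighted bound on the slices as the crux. However, your treatment of that crux contains a genuine gap. You assume a \emph{lossless} transference: that $T_{m(\cdot,y)}:L^1(u)\to L^{1,\infty}(u)$ holds with the same constant $\varphi(\lVert u\rVert_{A_1})$ that $T_m$ has, uniformly in $y$. You sketch a de Leeuw-type argument with a tensor weight $\tilde u(x,z)=u(x)w(z)$ but never establish what constant comes out. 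The mechanism the paper actually uses is \cite[Theorem 4.4]{cs:cs2}, applied with the specific choice $u=v\otimes\mathbb 1_{\mathbb R^k}$ (so $\lVert u\rVert_{A_1(\mathbb R^{n+k})}\le\lVert v\rVert_{A_1(\mathbb R^n)}$), and that theorem \emph{loses a factor} $\lVert u\rVert_{A_1}$: it gives
$\sup_y\lVert T_{m(\cdot,y)}\rVert_{L^1(v)\to L^{1,\infty}(v)}\lesssim\lVert u\rVert_{A_1}\,\lVert T_m\rVert_{L^1(u)\to L^{1,\infty}(u)}\le\lVert v\rVert_{A_1}\varphi(\lVert v\rVert_{A_1})$.
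This is where the factor $\lVert v\rVert_{A_1}$ in the stated constant really comes from; it enters the slice bound \emph{before} Corollary~\ref{cor:operators_prob_meas} is applied.

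As a consequence, your explanation of that factor is wrong. You attribute it to passing from the restricted estimate \eqref{rest} to the full weak-type estimate \eqref{eq:TA_bounded_Lorentz_Sp_p=1} via Theorem~\ref{thrm:from_restricted_to_all}, but this is a misdiagnosis on two counts. First, the proposition claims only $T_{m_\phi}:L^1_{\mathcal R}(v)\to L^{1,\infty}(v)$, the restricted bound, so there is nothing to upgrade and Theorem~\ref{thrm:from_restricted_to_all} is not invoked at all in the paper's proof of this statement. Second, even if you wanted to invoke it, you would have to verify that $T_{m_\phi}$ is $(\varepsilon,\delta)$-atomic approximable, which you assert on the grounds that it is ``a convolution-type operator with a suitable kernel''; a general bounded Fourier multiplier on $\mathbb R^n$ need not have a kernel in any $L^p$ with $p<\infty$, so this is not automatic and is not checked. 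The shape of your argument (average plus transference) is right, but without the precise restriction theorem and its constant you cannot reproduce the stated bound, and the proposed rescue via atomic approximability both changes the conclusion and rests on an unverified hypothesis.
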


\begin{proof}

\noindent Take $v \in A_1(\mathbb R^n)$ and define $u = v\otimes \chi_{\mathbb R^k}$, so that $$\begin{array}{ccl}
     u \,:\, & \mathbb R^n \times \mathbb R^k & \longrightarrow \mathbb R,  \\
     & (x, \,y) & \longmapsto u(x,y) = v(x),
\end{array}$$ satisfies $u \in A_1(\mathbb R^{n + k})$ with $||u||_{A_1} \leq ||v||_{A_1}$. Then, $T_m:L^1(u) \longrightarrow L^{1,\infty}(u)$ and, by \cite[Theorem 4.4]{cs:cs2} (where here is used that $m$ is normalized), $$T_{m(\cdot, y)}:L^1(v) \longrightarrow L^{1,\infty}(v), \qquad \forall y \in \mathbb R^k,$$ with 
\begin{align*}
    \sup_{y \in \mathbb R^k} ||T_{m(\cdot, y)}||_{L^1(v) \rightarrow L^{1,\infty}(v)} &\lesssim ||u||_{A_1} ||T_m||_{L^1(u) \rightarrow L^{1,\infty}(u)} \leq ||v||_{A_1}\varphi(||v||_{A_1}).
\end{align*}

Now, take $f \in \mathcal C^\infty_c(\mathbb R^n)$. Then, for every $y \in \mathbb R^k$ we have that $m(\cdot, y)\hat f \in L^1(\mathbb R^n)$ and, as well, $m_\phi\hat f \in L^1(\mathbb R^n)$, so that, by the
properties of the Fourier transform,
\begin{align*}
    T_{m(\cdot, y)}f(x) &= \left(m(\cdot, y) \hat f\right)^\vee(x)\quad \text{ and } \quad
    T_{m_\phi} f(x) = (m_\phi \hat f)^\vee (x), \qquad \forall x \in \mathbb R^n. 
\end{align*}
 Hence, by Fubini's theorem,
\begin{align*}
    T_{m_\phi} f(x) &= \int_{\mathbb R^n} m_\phi(\xi)\hat f(\xi) e^{2\pi i x\cdot \xi}  \, d\xi = \int_{\mathbb R^n} \left( \int_{\mathbb R^k} m(\xi, y) \phi(y)\, dy \right)\hat f(\xi) e^{2\pi i x\cdot \xi}  \, d\xi \\ &=\int_{\mathbb R^k} \left( \int_{\mathbb R^n} m(\xi, y)  \hat f(\xi) e^{2\pi i x\cdot \xi}\, d\xi \right)  \phi(y)\, dy =
    \int_{\mathbb R^k} T_{m(\cdot, y)} f(x)  \phi(y)\, dy, 
\end{align*}
and the result follows as in Corollary \ref{cor:operators_prob_meas} together with the density of $L^{p,1}(v)$ by functions in $\mathcal C^{\infty}_c(\mathbb R^n) \cap L^{p,1}(v)$. 
\end{proof}

\medskip

\noindent
{\bf (II) Integral operators}

\medskip

\noindent Let us now consider the operator
$$
Tf(x)= \int_{\mathbb R^m} K(x,y) f(y) dy, \qquad x \in \mathbb R^n,
$$
where the integral kernel $K$ satisfies some size condition of the form $|K(x,y)|\lesssim |x-y|^{-n}$.
\begin{proposition} 

Assume that, for every $s > 0$,
$$
T_s f(x)= \int_{|x - y|\ge s} K(x, y) f(y) dy, \qquad x \in \mathbb R^n,
$$
satisfies that
$$
T_s:L^1_\mathcal{R}(u) \longrightarrow L^{1,\infty}(u), \qquad \varphi(||u||_{A_1}),\qquad \forall u \in A_1,
$$
where $\varphi$ is a positive nondecreasing function on $[1,\infty)$. Then, if $\phi$ is a bounded variation function on $(0,\infty)$ with $\lim_{x \rightarrow 0^+}\phi(x) = 0$, we have that
$$
T_\phi f(x)= \int_{\mathbb R^m} K(x, y) \phi(|x-y|) f(y) dy, \qquad x \in \mathbb R^n,
$$
satisfies that
$$
T_\phi: L^1_{\mathcal R}(u) \longrightarrow L^{1, \infty}(u),\quad C_1\varphi(C_2||u||_{A_1})(1+\log ||u||_{A_1}),\quad \forall u \in A_1. 
$$
\end{proposition}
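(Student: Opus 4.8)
The plan is to reduce the boundedness of $T_\phi$ to that of the family $\{T_s\}_{s>0}$ by writing $\phi$ as an average of step functions, exactly in the spirit of the motivating Hilbert-transform example in the introduction. Since $\phi$ is of bounded variation on $(0,\infty)$ with $\lim_{x\to 0^+}\phi(x)=0$, we have, for $r>0$,
$$
\phi(r)=\int_{(0,\infty)} \chi_{(s,\infty)}(r)\, d\phi(s),
$$
where $d\phi$ is the (finite, signed) Lebesgue--Stieltjes measure associated with $\phi$. Substituting $r=|x-y|$ inside the integral defining $T_\phi$ and applying Fubini's theorem, we obtain the representation
$$
T_\phi f(x)=\int_{(0,\infty)} \left(\int_{|x-y|>s} K(x,y) f(y)\, dy\right) d\phi(s)=\int_{(0,\infty)} T_s f(x)\, d\phi(s),
$$
valid at least for $f$ in a suitable dense class (say bounded and compactly supported, for which the iterated integrals converge absolutely given the size bound on $K$). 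The Jordan decomposition $\phi=\phi_1-\phi_2$ with $\phi_1,\phi_2$ nondecreasing and bounded splits $d\phi$ into two finite positive measures of total mass $\|d\phi\|\le \mathrm{Var}(\phi)$; after normalizing, each is a probability measure and $T_\phi$ becomes a finite linear combination of average operators of the type covered by Corollary~\ref{cor:operators_prob_meas}.

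Next I would feed this into the machinery already built. The hypothesis gives $T_s:L^1_{\mathcal R}(u)\to L^{1,\infty}(u)$ with constant $\varphi(\|u\|_{A_1})$ for every $u\in A_1$, uniformly in $s$. One subtlety: Corollary~\ref{cor:operators_prob_meas} is stated for a family satisfying the \emph{full} weak-type bound $T_\theta:L^1(u)\to L^{1,\infty}(u)$, whereas here we only assume the restricted weak-type bound $L^1_{\mathcal R}$. But inspecting the proof of that corollary, what is actually used is that each $T_s$ is bounded $L^{p,1}(v)\to L^{p,\infty}(v)$ for $v\in\widehat A_p$, and Theorem~\ref{thrm:first_extrap_result}'s converse (as noted after it, $T:L^1_{\mathcal R}(u)\to L^{1,\infty}(u)\ \forall u\in A_1 \iff T:L^{p_0,1}(v)\to L^{p_0,\infty}(v)\ \forall v\in\widehat A_p$) yields exactly this from the restricted hypothesis; alternatively, Theorem~\ref{thrm:principal_extrap_result} applies once one upgrades the restricted bound to a genuine $L^1(u)\to L^{1,\infty}(u)$ bound, but the cleanest route is to run the argument of Corollary~\ref{cor:operators_prob_meas} with the restricted hypothesis throughout. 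Thus, for each fixed $p>1$ and each $v\in\widehat A_p$, one gets $T_s:L^{p,1}(v)\to L^{p,\infty}(v)$ with constant $\Phi(\|v\|_{\widehat A_p})$ uniform in $s$. Since $L^{p,\infty}(v)$ is normable with an equivalent norm $\|\cdot\|_{(p,\infty,v)}$ satisfying $\|\cdot\|_{L^{p,\infty}(v)}\le\|\cdot\|_{(p,\infty,v)}\le \tfrac{p}{p-1}\|\cdot\|_{L^{p,\infty}(v)}$, Minkowski's integral inequality applied to $T_\phi f=\int T_s f\, d\phi(s)$ gives
$$
\|T_\phi f\|_{L^{p,\infty}(v)} \le \tfrac{p}{p-1}\,\mathrm{Var}(\phi)\,\Phi(\|v\|_{\widehat A_p})\,\|f\|_{L^{p,1}(v)}, \qquad \forall v\in\widehat A_p.
$$
Then Theorem~\ref{thrm:first_extrap_result} brings us back to the endpoint: $T_\phi:L^1_{\mathcal R}(u)\to L^{1,\infty}(u)$ with constant $C\,\|u\|_{A_1}^{1-1/p}\varphi_\Phi(\cdots)$, and optimizing over $p>1$ — taking $p-1\approx 1/(1+\log\|u\|_{A_1})$ as in the proof of Corollary~\ref{cor:operators_prob_meas} — collapses the power $\|u\|_{A_1}^{1-1/p}$ into a logarithmic factor, producing the claimed bound $C_1\varphi(C_2\|u\|_{A_1})(1+\log\|u\|_{A_1})$.

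The step I expect to be the main obstacle is justifying the Fubini interchange and, more importantly, the \emph{pointwise} identity $T_\phi f(x)=\int_{(0,\infty)}T_s f(x)\, d\phi(s)$ rigorously on a class dense in every $L^{p,1}(v)$, $v\in\widehat A_p$. The size condition $|K(x,y)|\lesssim|x-y|^{-n}$ does not by itself make $\int_{\mathbb R^m}|K(x,y)||f(y)|\,dy$ finite (the tail and the diagonal are both borderline), so one must be careful: for $f$ bounded with compact support the truncated operators $T_s f$ are well-defined and bounded in $x$, and $s\mapsto T_s f(x)$ is of bounded variation compatible with integrating against $d\phi$; the density of such $f$ in $L^{p,1}(v)$ for $v$ doubling (which $\widehat A_p$ weights are, with $\Delta_v\lesssim\|v\|_{\widehat A_p}^p$) then transfers the inequality. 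A secondary point to check is the uniformity in $s$ of all constants through the extrapolation chain, but since every bound produced by Theorems~\ref{thrm:principal_extrap_result} and~\ref{thrm:first_extrap_result} depends on $T_s$ only through the single function $\varphi$ (which is independent of $s$ by hypothesis), this uniformity is automatic.
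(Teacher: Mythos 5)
Your argument is essentially the paper's: represent $T_\phi$ as a finite-measure average of the truncations $T_s$ and then run the extrapolation machinery of Corollary~\ref{cor:operators_prob_meas} (i.e.\ Theorem~\ref{thrm:principal_extrap_result}, Minkowski in the normed version of $L^{p,\infty}(v)$, Theorem~\ref{thrm:first_extrap_result}, and optimization over $p$). Two small refinements on your part are worth keeping: using the Lebesgue--Stieltjes measure $d\phi$ covers arbitrary BV $\phi$, whereas the paper's identity $\phi(r)=\int_0^r\phi'(s)\,ds$ with $\phi'\in L^1$ is really the absolutely continuous case; and you correctly note that the hypothesis here is only $T_s:L^1_{\mathcal R}(u)\to L^{1,\infty}(u)$, weaker than what Corollary~\ref{cor:operators_prob_meas} literally assumes, and you explain why the pairs $(\chi_E,T_s\chi_E)$ version of Theorems~\ref{thrm:principal_extrap_result} and~\ref{thrm:first_extrap_result} still carries the argument. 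The Fubini concern you flag is handled in the paper exactly as you anticipate, by an $\varepsilon$-cutoff, i.e.\ comparing $T_\phi f(x)-\phi(\varepsilon)T f(x)$ with $\int_\varepsilon^\infty T_s f(x)\,\phi'(s)\,ds$ on suitable $f$ and then letting $\varepsilon\to 0^+$.
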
 
 
\begin{proof} We observe that, by hypothesis, 
$$
\phi(|x-y|)= \int_0^{|x-y|}  \phi'(s) ds, \qquad \phi'\in L^1(\mathbb R^n), 
$$
and hence, for every $x \in \mathbb R^n$ and every $\varepsilon>0$, by Fubini's theorem we have that \begin{align*}
    T_\phi f(x)-\phi(\varepsilon)Tf(x) &= \int_0^\infty \left(\int_{|x-y| \ge s\ge \varepsilon }K(x, y) 
f(y) dy \right) {\phi'(s) } \, ds\\ & = \int_\varepsilon^\infty T_sf(x) \phi'(s) \, ds,
\end{align*} is an average operator, and so the result follows by Corollary~\ref{cor:operators_prob_meas} and letting $\varepsilon$ tend to zero. 
\end{proof}

 \section{Limited extrapolation results}
 
 The   motivation of this section comes from the fact  that there are also many operators in harmonic analysis  (such as the  Bochner-Riesz)  so that 
$$
T: L^{p_0}(v) \longrightarrow L^{p_0}(v)
$$
is not bounded for every $v\in A_{p_0}$ but is bounded for every $v$ in a certain subclass of $A_{p_0}$. Under this weaker hypothesis, only boundedness on $L^p(v)$ of $T$ can be deduced whenever $p\in (p_-, p_+)$ for certain values of $p_-$ and $p_+$. The purpose of this section is to establish some equivalence,  similar to Theorem \ref{thrm:principal_extrap_result}, between the boundedness at the endpoint $p_-$ and restricted weak-type boundedness at the $p$ level. Indeed, the Rubio de Francia extrapolation results in this case are called limited extrapolation (see \cite{am:am,cdl:cdl,coc:coc,cmpL:cmpL,d:d}).

\begin{definition} Given $0 \leq \alpha, \beta \leq 1$ and  $1 \leq p < \infty$, let us define the classes of weights 
$$
A_{p;(\alpha, \beta)} = \left\{0<v\in L^1_{\text{loc}}(\mathbb{R}^n) : v = v_0^\alpha v_1^{\beta(1-p)}, v_j \in A_1\right\} 
$$ with $$||v||_{A_{p;(\alpha,\beta)}} = \inf \left\{ ||v_0||_{A_1}^\alpha||v_1||_{A_1}^{\beta(p-1)}: v = v_0^\alpha v_1^{\beta(1-p)} \right\},$$
and
$$
\hat A_{p;(\alpha, \beta)} = \left\{0<v\in L^1_{\text{loc}}(\mathbb{R}^n) : v = v_0^\alpha (Mh)^{\beta(1-p)}, v_0 \in A_1, \ h\in L^1_\text{loc}(\mathbb R^n)\right\}
$$ with $$||v||_{\hat A_{p;(\alpha, \beta)} } = \inf \left\{||v_0||_{A_1}^{\frac\alpha{1 + \beta(p-1)}}: v = v_0^\alpha (Mf)^{\beta(1 - p)}\right\}.$$
\end{definition}

\begin{definition} Given $1 \leq p_0 < \infty$ and $0 \leq \alpha, \beta \leq 1$, set
$$
p_{+} = \frac{p_0}{1 - \alpha}, \hspace{5mm} p_{-}' = \frac{p_0'}{1-\beta}, \quad \left( \text{or } p_{-} = \frac{p_0}{1 + \beta(p_0-1)} \right),
$$ 
where $p_+ = \infty$ if $\alpha = 1$ and $p_- = 1$ if $\beta = 1$. Then, $1 \leq p_{-} \leq p_{+} \leq \infty$ and we can associate to every $p \in [p_-, p_{+}]$ the indices $$\alpha(p) = \frac{p_+ - p}{p_+} \qquad \text{ and } \qquad \beta(p) = \frac{p - p_-}{p_-(p - 1)},$$ so that $0 \leq \alpha(p), \beta(p) \leq 1$, $p_{+} = \frac{p}{1 - \alpha(p)}$, $p_{-}' = \frac{p'}{1-\beta(p)}$ and $\alpha(p_0) = \alpha$, $\beta(p_0) = \beta$. 
\end{definition}

\begin{theorem}[\cite{d:d}]\label{thrm:limited_extrapolation_Ap}
Let $(f, g)$ be a pair of measurable functions such that for some $1 \leq p_0 < \infty$ and $0 \leq \alpha , \beta \leq 1$ (not both identically zero) we have $$\norm{g}_{L^{p_0}(v)} \leq \varphi\left(\norm{v}_{A_{p_0;(\alpha,\beta)}}\right) \norm{f}_{L^{p_0}(v)}, \qquad \forall v \in A_{p_0;(\alpha, \beta)},$$ where $\varphi$ is a nondecreasing function on $[1,\infty)$. Then, for every $p_- < p < p_{+}$, $$\norm{g}_{L^{p}(v)} \leq C_1\varphi\parenthesis{C_2 \norm{v}_{A_{p;(\alpha(p),\beta(p))}}^{\max\parenthesis{\frac{p_+ - p_0}{p_+ - p}, \frac{p_0 - p_-}{p - p_-}} } } \norm{f}_{L^{p}(v)}, \qquad \forall v \in A_{p;(\alpha(p), \beta(p))}, $$with $C_1$ and $C_2$ being two positive constants independent of all parameters involved. 
\end{theorem}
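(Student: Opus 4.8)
The plan is to establish this via the Rubio de Francia algorithm, arranged so that it produces weights in the factored class $A_{p_0;(\alpha,\beta)}$; the argument runs parallel to the proof of Theorem~\ref{thrm:classical_rubio_de_Francia}, which is the special case $\alpha=\beta=1$ (then $p_+=\infty$, $p_-=1$, $A_{p;(1,1)}=A_p$), and checking that the exponent $\max\big(\tfrac{p_+-p_0}{p_+-p},\tfrac{p_0-p_-}{p-p_-}\big)$ collapses to $\max\big(1,\tfrac{p_0-1}{p-1}\big)$ in that case is a useful sanity check. First I would make the standard reductions: it suffices to take $0\le f,g$, and one may assume $\|f\|_{L^p(v)}<\infty$ and $\|g\|_{L^p(v)}<\infty$ (replace $g$ by $g\chi_{B(0,N)}$ and let $N\to\infty$ at the end). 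Then I would split into the two regimes $p_0\le p<p_+$ and $p_-<p\le p_0$, which account for the two terms inside the maximum; they are handled in parallel, the first by duality and the second by Hölder's inequality.

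For $p_0\le p<p_+$, fix $v=v_0^{\alpha(p)}v_1^{\beta(p)(1-p)}\in A_{p;(\alpha(p),\beta(p))}$ with $v_0,v_1\in A_1$ and put $r=p/p_0\ge 1$. Since $\int g^p\,v=\|g^{p_0}\|_{L^r(v)}^{\,r}$, by duality in $L^r(v)$ it is enough to estimate $\int g^{p_0}h\,v$ for an arbitrary $0\le h$ with $\|h\|_{L^{r'}(v)}\le 1$. The heart of the matter is to manufacture, out of $v_0$, $v_1$ and $h$, a weight $w$ which (i) lies in $A_{p_0;(\alpha,\beta)}$, (ii) dominates $hv$ pointwise up to a constant, and (iii) has $\|w\|_{A_{p_0;(\alpha,\beta)}}$ bounded by a power of $\|v\|_{A_{p;(\alpha(p),\beta(p))}}$. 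For this one runs the iteration $\mathcal R h=\sum_{k\ge 0}(2B)^{-k}M^k h$, with $B$ the norm of $M$ on the relevant weighted space attached to $v$, so that $h\le\mathcal R h$, $\|\mathcal R h\|\lesssim\|h\|$ in that space, and $[\mathcal R h]_{A_1}\lesssim B$, and then takes $w$ to be an appropriate product of powers of $v_0$, $v_1$ and $\mathcal R h$, the exponents being forced by the identities $p_+=\tfrac{p}{1-\alpha(p)}=\tfrac{p_0}{1-\alpha}$ and $p_-'=\tfrac{p'}{1-\beta(p)}=\tfrac{p_0'}{1-\beta}$. Feeding $w$ into the hypothesis yields $\int g^{p_0}h\,v\lesssim\int g^{p_0}w\le\varphi\big(\|w\|_{A_{p_0;(\alpha,\beta)}}\big)^{p_0}\int f^{p_0}w$, and undoing the construction --- Hölder with exponents $r,r'$ together with $\|\mathcal R h\|\lesssim 1$ --- bounds the last integral by a constant times $\|f\|_{L^p(v)}^{p_0}$; raising to the power $1/p_0$ and taking the infimum over the factorisations of $v$ closes this case.

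The step I expect to be the main obstacle is exactly the construction of $w$, which carries all of the numerology: one must decide which pieces of $v$ to keep, which to replace by a controlled $A_1$-power (using that $A_1$ is stable under exponents in $(0,1]$, together with the quantitative estimate \eqref{eq:MfumuA1} for products $(Mh)^\mu u^{1-\mu}$), and which power of $\mathcal R h$ to attach, so that $w$ genuinely factors as $w_0^{\alpha}w_1^{\beta(1-p_0)}$ with $w_0,w_1\in A_1$ --- this is dictated by the relations linking $(\alpha(p),\beta(p),p)$ with $(\alpha,\beta,p_0)$ through the common endpoints $p_\pm$ --- and so that the $A_1$-norms propagate to give precisely $\|w\|_{A_{p_0;(\alpha,\beta)}}\lesssim\|v\|_{A_{p;(\alpha(p),\beta(p))}}^{(p_+-p_0)/(p_+-p)}$. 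The regime $p_-<p\le p_0$ is treated in the same manner, except that $\int g^p v$ is split by Hölder's inequality (there is no room to lower the power of $g$ by duality), which produces the exponent $(p_0-p_-)/(p-p_-)$ instead; combining the two regimes gives the stated maximum. The endpoint configurations $\alpha=1$ ($p_+=\infty$) and $\beta=1$ ($p_-=1$), where one of the two iterations degenerates onto $L^1$, require no extra work and are handled as in Theorem~\ref{thrm:classical_rubio_de_Francia}.
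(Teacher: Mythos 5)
The paper does not prove this theorem: it is quoted verbatim, with attribution, from Duoandikoetxea~\cite{d:d}, so there is no internal proof to compare against. Evaluated against the argument in that reference, your sketch correctly identifies the standard strategy: split into the regimes $p\ge p_0$ and $p\le p_0$, reduce by duality in $L^{p/p_0}(v)$ (respectively by H\"older) to a pairing $\int g^{p_0}h\,v$, manufacture from $h$ and the two $A_1$ factors of $v$ an auxiliary weight $w\in A_{p_0;(\alpha,\beta)}$ dominating $hv$, apply the hypothesis, undo by H\"older, and take the infimum over factorisations. Your sanity check that the exponent collapses to $\max\big(1,\tfrac{p_0-1}{p-1}\big)$ when $\alpha=\beta=1$ is also correct.

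However, you explicitly leave the central step unresolved. You write that one must ``decide which pieces of $v$ to keep, which to replace by a controlled $A_1$-power, and which power of $\mathcal R h$ to attach,'' but you never determine those exponents, never verify that the resulting $w$ factors as $w_0^\alpha w_1^{\beta(1-p_0)}$ with $w_0,w_1\in A_1$, and never show that $\|w\|_{A_{p_0;(\alpha,\beta)}}\lesssim\|v\|_{A_{p;(\alpha(p),\beta(p))}}^{(p_+-p_0)/(p_+-p)}$. That computation is the entire content of the theorem and is not routine: in the factored setting a single Rubio de Francia iteration of $M$ on $L^{r'}(v)$ is not enough; one needs iterations tuned to the two $A_1$ components separately (or, equivalently, an iteration of a composite operator built from $M$ and the factors $v_0,v_1$), and the geometry of $w$ is forced by the identities $p_+=p/(1-\alpha(p))=p_0/(1-\alpha)$ and $p_-'=p'/(1-\beta(p))=p_0'/(1-\beta)$ in a way that must be carried out coefficient by coefficient. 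Likewise the claim that the endpoints $\alpha=1$ ($p_+=\infty$) and $\beta=1$ ($p_-=1$) ``require no extra work'' needs checking, since one of the two iterations degenerates there. As written, your proposal is a correct road map but not a proof; the ``main obstacle'' you name is exactly where the theorem lives.
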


Observe that in Theorem~\ref{thrm:limited_extrapolation_Ap} is not possible to extrapolate till the endpoints $p_-$ and $p_+$. However, in \cite[Theorem~3.7]{coc:coc} the authors were able to obtain an estimate in the endpoint $p_-$. To do so, they needed to assume that the operators satisfy a restricted weak-type boundedness for the class of weights $\hat A_{p;(\alpha, \beta)}$  which is a slightly bigger class than $A_{p;(\alpha, \beta)}$. 

\begin{theorem}[\cite{coc:coc}] 
\noindent Let $1 \leq p_0 < \infty$, $0 \leq \alpha, \beta \leq 1$ (not both identically zero) and let $T$ be an operator. Assume that \begin{equation*}
    T:L^{p_0, 1}(v) \longrightarrow L^{p_0, \infty}(v), \qquad \varphi(\Vert v\Vert _{\hat A_{p_0;(\alpha, \beta)}}), \qquad \forall v \in \hat A_{p_0;(\alpha, \beta)},
\end{equation*}
where $\varphi$ is a positive nondecreasing function on $[1, \infty)$. Then:

\begin{enumerate}[(i)]
    \item If $p_- > 1$, 
    \begin{equation*}
        T:L^{p_-,1}\left(u^{\alpha(p_-)}\right) \longrightarrow L^{p_-, \infty}\left(u^{\alpha(p_-)}\right), \quad \frac {\Phi_{p_-}(||u||_{A_1}^{\alpha(p_-)})}{p_- - 1}, \quad \forall u \in A_1,
    \end{equation*} 
    where $\Phi_{p_-}$ is a positive nondecreasing function on $[1, \infty)$.
    
    \item If $p_- = 1$,  
    \begin{equation*}
    T:L^1_\mathcal R\left(u^{\alpha(p_-)}\right) \longrightarrow L^{1, \infty}\left(u^{\alpha(p_-)}\right), \qquad \Phi_1(||u||_{A_1}^{\alpha(p_-)}), \qquad \forall u \in A_1.
    \end{equation*}
     
\end{enumerate}

\end{theorem}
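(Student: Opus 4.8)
The plan is to run, over the limited classes $\hat A_{p;(\alpha,\beta)}$, the analogue of the downward restricted-weak-type extrapolation behind Theorem~\ref{thrm:first_extrap_result}, using the Sawyer-type machinery of Section~\ref{sec:def_technic} (Lemmas~\ref{lemma:difi_prev} and~\ref{difi}) adapted to those classes, so as to descend from the $p_0$-level hypothesis to the lower endpoint $p_-$. Fix $u\in A_1$ and write $a=\alpha(p_-)$. For part~(i) one argues with a general $f\in L^{p_-,1}(u^a)$, for part~(ii) with $f=\chi_E$; in either case, given $y>0$, set $F_y=\{x:|Tf(x)|>y\}$ (which, after truncating $f$ by balls, may be taken of finite $u^a$-measure) and reduce to the level-set estimate $y\,u^a(F_y)^{1/p_-}\lesssim \Phi\,\|f\|_{L^{p_-,1}(u^a)}$, with $\Phi$ depending only on $\|u\|_{A_1}^a$.

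To invoke the hypothesis one must manufacture, out of $u$, $f$ (or $E$) and $F_y$, an auxiliary weight $v\in\hat A_{p_0;(\alpha,\beta)}$ of the form $v=v_\sharp^{\alpha}(Mh)^{\beta(1-p_0)}$, where $v_\sharp$ is built from $u$ by the power trick \eqref{eq:u^t_A1} together with an exponent $\theta$ close to the critical value (to be optimized at the end), and $h$ is a gauge function attached to $F_y$; items \eqref{eq:MfumuA1}--\eqref{eq:u^t_A1} then give $\|v\|_{\hat A_{p_0;(\alpha,\beta)}}\lesssim \|u\|_{A_1}^{c}$. Applying $T:L^{p_0,1}(v)\to L^{p_0,\infty}(v)$ to $f$ yields $y\,v(F_y)^{1/p_0}\le\|Tf\|_{L^{p_0,\infty}(v)}\le \varphi(\|v\|_{\hat A_{p_0;(\alpha,\beta)}})\,\|f\|_{L^{p_0,1}(v)}$, and H\"older's inequality for Lorentz spaces with respect to $v(x)\,dx$ (exactly as in the proof of Theorem~\ref{thrm:principal_extrap_result}) throws the problem onto a limited, restricted-weak-type Sawyer inequality of the shape $\|M_\mu(\chi_{F_y}w)/w\|_{L^{p_0',\infty}(v)}\lesssim C_{p_0,\theta,\mu}(u)\,v(F_y)^{1/p_0'}$ for a weight $w$ interpolating between $v$ and $u^a$: the analogue of Lemma~\ref{difi}, to be proved by the same scheme (Kolmogorov's and Fefferman-Stein's inequalities, \eqref{eq:MfumuA1}--\eqref{eq:u^t_A1}, and the doubling and reverse-H\"older properties of limited-class weights).

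Two points separate the cases. For $p_->1$ one may freeze $\theta$ strictly below the critical value (independently of $y$), and a single choice of $v$ already gives the full bound $T:L^{p_-,1}(u^a)\to L^{p_-,\infty}(u^a)$; the factor $1/(p_--1)$ is the price of being unable to take $\theta$ all the way to the endpoint, i.e.\ the blow-up, as $p_-\downarrow 1$, of the constants appearing in Lemma~\ref{lemma:difi_prev} and in \eqref{C_pthetamu(u)} near their endpoints. For $p_-=1$ this freezing is impossible, and indeed the full weak-type $(1,1)$ bound fails in general, so one aims only at $T:L^1_{\mathcal R}(u^a)\to L^{1,\infty}(u^a)$; here the $p_0$-level estimate controls $y^{p_0}v(F_y)$ while $L^{1,\infty}(u^a)$ asks for $y\,u^a(F_y)$, so the powers of $y$ do not match for a fixed weight, and the remedy is to let the free parameter in $v$ (equivalently $\theta$, or the quantity playing the role of $R=1+\log\|u\|_{A_1}$ in the proof of Theorem~\ref{thrm:principal_extrap_result}) depend on the ratio $u^a(F_y)/u^a(E)$; balancing the two sides is what produces the logarithmic factor hidden in $\Phi_1$.

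I expect the main obstacle to be precisely the construction and estimation of this auxiliary weight: producing $v\in\hat A_{p_0;(\alpha,\beta)}$ which simultaneously has $\|v\|_{\hat A_{p_0;(\alpha,\beta)}}$ bounded by a fixed power of $\|u\|_{A_1}$, satisfies $v(E)\lesssim u^a(E)$ up to such a power, and admits the endpoint Sawyer inequality with only a logarithmic loss; that is, establishing the limited-class analogues of Lemmas~\ref{lemma:difi_prev} and~\ref{difi}, for which one needs the reverse-H\"older and Sawyer-type estimates for $A_{p;(\alpha,\beta)}$ weights in their $L^{q,\infty}$ formulation. A possible shortcut is to transport $\hat A_{p_0;(\alpha,\beta)}$ to some $\hat A_{q_0}$ by a change of variables in the weight and deduce the statement directly from Theorem~\ref{thrm:first_extrap_result}, but tracking how $\varphi$ and all constants transform under that substitution is itself the delicate part.
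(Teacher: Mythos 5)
This result is cited here from \cite{coc:coc} (their Theorem~3.7) and is not reproved in the present paper, so there is no internal proof to compare against; I can only evaluate your plan on its own merits, and I see a structural problem with it. You propose to descend from the $p_0$-level hypothesis to $p_-$ by first applying $T\colon L^{p_0,1}(v)\to L^{p_0,\infty}(v)$ and then, ``exactly as in the proof of Theorem~\ref{thrm:principal_extrap_result},'' invoking H\"older for Lorentz spaces followed by a limited-class Sawyer inequality in the spirit of Lemmas~\ref{lemma:difi_prev} and~\ref{difi}. But that H\"older/Sawyer engine belongs to the \emph{upward} direction: in Theorem~\ref{thrm:principal_extrap_result} the maximal function $M_\mu(\chi_F v_\theta)$ is inserted on the left-hand side as a pointwise majorant of $v_\theta$ on $F$, so as to create an $A_1$ weight $M_\mu(\chi_F v_\theta)u^{1-\theta}$ against which the $L^1(u)\to L^{1,\infty}(u)$ hypothesis can be tested; only after that does one write the integral as a pairing, apply H\"older in $L^{p,1}(v)\times L^{p',\infty}(v)$, and close with Lemma~\ref{difi}. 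Going downward, after applying the $p_0$-hypothesis you simply have $y\,v(F_y)^{1/p_0}\le\varphi(\cdot)\,\|f\|_{L^{p_0,1}(v)}$: the right-hand side is a single Lorentz norm of $f$, not an integral pairing, so there is nothing to H\"older it against and no natural point at which to insert a maximal operator. A Sawyer estimate of the shape $\|M_\mu(\chi_{F_y}w)/w\|_{L^{p_0',\infty}(v)}\lesssim v(F_y)^{1/p_0'}$ never enters.

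What the downward argument actually needs (following \cite{cgs:cgs} and its limited-class version in \cite{coc:coc}) is the opposite construction: one builds the auxiliary weight $v=v_0^{\alpha}(Mh)^{\beta(1-p_0)}\in\hat A_{p_0;(\alpha,\beta)}$ directly from $u$ and the test set, typically with $h$ coming from a Rubio de Francia iteration applied to $\chi_E$ (and/or $\chi_{F_y}$), and then proves two pointwise/measure comparisons: $v(E)\lesssim u^{\alpha(p_-)}(E)$ and $v(F_y)\gtrsim u^{\alpha(p_-)}(F_y)$, with constants controlled by $\|u\|_{A_1}$. Combined with the $p_0$-level bound, these give $y\,u^{\alpha(p_-)}(F_y)^{1/p_-}\lesssim \Phi(\|u\|_{A_1}^{\alpha(p_-)})\,u^{\alpha(p_-)}(E)^{1/p_-}$; for $p_->1$ one further needs to upgrade from characteristic $f$ to general $f\in L^{p_-,1}$, which is where the $1/(p_--1)$ blow-up arises, while for $p_-=1$ the restricted statement is all one gets (as you correctly note). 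Your heuristic remarks on the origin of the $1/(p_--1)$ factor and on letting a free parameter depend on $u^{\alpha(p_-)}(F_y)/u^{\alpha(p_-)}(E)$ in the $p_-=1$ case are reasonable, but they sit on top of a mechanism that, as written, does not exist; I would either carry out the iteration/weight-comparison argument explicitly in the limited setting, or make rigorous the rescaling-to-$\hat A_{q_0}$ reduction you mention at the end, tracking what happens to $\varphi$ under the substitution.
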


Our following theorem shows that the converse is also true:

\begin{theorem}\label{thrm:restricted_extrapolation_limited_lorentz}

\noindent Let $(f, g)$ be a pair of measurable functions such that for some $1 \leq p_0 < \infty$ and $0 < \alpha \leq 1$, 
\begin{equation*}
    ||g||_{L^{p_0,\infty}(u^\alpha)} \leq \varphi\parenthesis{\norm{u}_{A_1}^\alpha} ||f||_{L^{p_0,1}(u^\alpha)}, \qquad \forall u \in A_1,
\end{equation*} 
with $\varphi$ being a nondecreasing function on $[1,\infty)$. Then, for any $p_0 \leq p < \frac{p_0}{1 - \alpha}$, $$||g||_{L^{p,\infty}(v)} \leq \Psi\left(||v||_{\hat A_{p;(\alpha(p), \beta(p))}}\right) ||f||_{L^{p, 1}(v)}, \qquad \forall v \in \hat{A}_{p;(\alpha(p), \beta(p))},$$ where $\alpha(p) = 1 - \frac{p(1 - \alpha)}{p_0}$, $\beta(p) = \frac{p - p_0}{p_0(p-1)}$ and, for every $r \geq 1$,
\begin{equation*}
    \Psi(r) = C_1\left(\frac 1{p_0 - p(1 - \alpha)}\right)^{\frac{p - p_0}p}\varphi\left(C_2 r^\frac{\alpha p}{p_0 - p(1 - \alpha)}\right) r^{\frac{\alpha (p - p_0)}{p_0 - p(1 - \alpha)}} \left(1 + \log r\right)^{\frac{2(p - p_0)}p},
\end{equation*} with $C_1$ and $C_2$ being two positive constants independent of all parameters involved.

\end{theorem}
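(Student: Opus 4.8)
The plan is to run the scheme of the proof of Theorem~\ref{thrm:principal_extrap_result}, but with the starting exponent $1$ replaced by $p_0$. The first step is a structural observation that puts $\hat A_{p;(\alpha(p),\beta(p))}$ inside a Muckenhoupt--Lorentz class at a rescaled exponent. Fix $v\in\hat A_{p;(\alpha(p),\beta(p))}$ and a representation $v=v_0^{\alpha(p)}(Mh)^{\beta(p)(1-p)}$ with $v_0\in A_1$. Since $\beta(p)=\frac{p-p_0}{p_0(p-1)}$, one checks $1+\beta(p)(p-1)=p/p_0$ and $\beta(p)(1-p)=1-p/p_0$, so $v=(Mh)^{1-p/p_0}\,v_0^{\alpha(p)}$ and therefore $v\in\hat A_{p/p_0}$, with $\|v\|_{\hat A_{p/p_0}}\le\|v\|_{\hat A_{p;(\alpha(p),\beta(p))}}$ (using $\|v_0^{\alpha(p)}\|_{A_1}\le\|v_0\|_{A_1}^{\alpha(p)}$). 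This is exactly what makes Lemma~\ref{difi} available with $\tilde p=p/p_0$ (so that $\tilde p'=(p/p_0)'$) and $\tilde u=v_0^{\alpha(p)}$. Throughout I write $q=(p/p_0)'=\frac{p}{p-p_0}$, so $1/q=(p-p_0)/p$; the case $p=p_0$ (where $q=\infty$, $\beta(p)=0$, $\alpha(p)=\alpha$, and the extra factors in $\Psi$ degenerate) is immediate, being a restatement of the hypothesis.

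Next I fix $y>0$, set $F=\{x:|g(x)|>y\}$, and — reducing to $v(F)<\infty$ by the truncation $g_N=g\chi_{B(0,N)}$ as in Theorem~\ref{thrm:principal_extrap_result} — introduce parameters $1/q<\theta<\mu\le1$, to be optimized later, together with $t=1+\frac1{2^{n+1}\|v_0\|_{A_1}}$ and $\mu=\alpha-\frac{\alpha(p)(1-\theta)}{t}$. Put $v_\theta=v_0^{\alpha(p)\theta}(Mh)^{\beta(p)(1-p)}$, so $v/v_\theta=v_0^{\alpha(p)(1-\theta)}$ and, since $M_\mu(\chi_Fv_\theta)\ge\chi_Fv_\theta$, the weight
$$
w'=M_\mu(\chi_Fv_\theta)\,\frac{v}{v_\theta}=M_\mu(\chi_Fv_\theta)\,v_0^{\alpha(p)(1-\theta)}
$$
dominates $v$ on $F$. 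By \eqref{eq:u^t_A1} and \eqref{eq:MfumuA1} one has $w'^{1/\alpha}=M\big((\chi_Fv_\theta)^{1/\mu}\big)^{\mu/\alpha}(v_0^t)^{1-\mu/\alpha}\in A_1$ with $\|w'^{1/\alpha}\|_{A_1}\lesssim\frac{\|v_0\|_{A_1}}{1-\mu/\alpha}$, uniformly in $F$; hence $w'=u_0^\alpha$ for some $u_0\in A_1$ with a controlled $A_1$ constant, and the hypothesis applied to this admissible weight gives
$$
y^{p_0}v(F)\le y^{p_0}w'(F)=y^{p_0}\lambda_g^{w'}(y)\le\varphi\big(\|u_0\|_{A_1}^\alpha\big)^{p_0}\,\|f\|_{L^{p_0,1}(w')}^{p_0}.
$$

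Now I convert $\|f\|_{L^{p_0,1}(w')}$ into $\|f\|_{L^{p,1}(v)}$. Writing $w'=\rho\,v$ with $\rho=M_\mu(\chi_Fv_\theta)/v_\theta$ and applying, layer by layer, H\"older's inequality for Lorentz spaces with respect to $v(x)\,dx$ with the exponents $q$ and $q'=p/p_0$ (so that $1/(p_0q')=1/p$), I get $\|f\|_{L^{p_0,1}(w')}\lesssim_{p,p_0}\|\rho\|_{L^{q,\infty}(v)}^{1/p_0}\|f\|_{L^{p,1}(v)}$. Lemma~\ref{difi}, applied with $\tilde p=p/p_0$ and $\tilde u=v_0^{\alpha(p)}$ (legitimate because $(p/p_0)'=q$, $1/q<\theta<\mu\le1$, $v=(Mh)^{1-p/p_0}v_0^{\alpha(p)}$ and $v_\theta=(Mh)^{1-p/p_0}(v_0^{\alpha(p)})^{\theta}$), yields $\|\rho\|_{L^{q,\infty}(v)}\lesssim C_{p/p_0,\theta,\mu}(v_0^{\alpha(p)})\,v(F)^{1/q}$. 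Substituting both bounds into the previous display and using $1-1/q=p_0/p$ to cancel the remaining power of $v(F)$, I obtain, after taking the supremum over $y>0$,
$$
\|g\|_{L^{p,\infty}(v)}\lesssim\varphi\big(\|u_0\|_{A_1}^\alpha\big)\,C_{p/p_0,\theta,\mu}\big(v_0^{\alpha(p)}\big)^{1/p_0}\,\|f\|_{L^{p,1}(v)}
$$
for every admissible $\theta$.

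The last step is to optimize over $\theta$ and over the representations of $v$. I would insert the explicit form of $C_{p/p_0,\theta,\mu}$ from \eqref{C_pthetamu(u)} (with $p$ there replaced by $p/p_0$), use $\mu-\theta\approx(1-\theta)(t-1)$ together with $t-1\approx\|v_0\|_{A_1}^{-1}$ to absorb the factor $(\mu-\theta)^{-\theta}$, bound $\varphi(\|u_0\|_{A_1}^\alpha)\le\varphi(\tilde C\|v_0\|_{A_1}^\alpha)$, choose $\theta$ just above the critical value $1/q$ calibrated against $R=1+\log\|v_0\|_{A_1}$, and finally pass to the infimum over $v_0$ (recall $r=\|v\|_{\hat A_{p;(\alpha(p),\beta(p))}}=\inf\|v_0\|_{A_1}^{\alpha(p)p_0/p}$), which should produce the exponents $\tfrac{\alpha p}{p_0-p(1-\alpha)}$, $\tfrac{\alpha(p-p_0)}{p_0-p(1-\alpha)}$, $\tfrac{2(p-p_0)}{p}$ and the factor $(p_0-p(1-\alpha))^{-(p-p_0)/p}$ entering $\Psi$. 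The hard part will be this calibration: in contrast with Theorem~\ref{thrm:principal_extrap_result}, the constraint $\theta<\mu$ combined with the rigid shape of $\mu$ (forced by the requirement $u_0\in A_1$) confines $\theta$ to a window of width $\sim\|v_0\|_{A_1}^{-1}$ around $1/q$, so keeping the $\|v_0\|_{A_1}$-power in $C_{p/p_0,\theta,\mu}$ down to the claimed size — and extracting only the logarithmic factor — will likely require sharpening the Sawyer-type estimate of Lemma~\ref{difi} in this critical regime rather than using it off the shelf.
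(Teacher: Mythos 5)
Your proposal follows the paper's own proof step by step: the identification $v=(Mh)^{1-p/p_0}v_0^{\alpha(p)}\in\hat A_{p/p_0}$ via $1+\beta(p)(p-1)=p/p_0$, the choice $t=1+\frac{1}{2^{n+1}\|v_0\|_{A_1}}$, the forced shape of $\mu$, the construction of the $A_1$ weight $w'^{1/\alpha}=M\bigl((\chi_F v_\theta)^{1/\mu}\bigr)^{\mu/\alpha}(v_0^t)^{1-\mu/\alpha}$ using \eqref{eq:u^t_A1} and \eqref{eq:MfumuA1}, the application of the hypothesis with the good weight, the layer-cake H\"older step, the invocation of Lemma~\ref{difi} at exponent $p/p_0$ with $\tilde u=v_0^{\alpha(p)}$, and the final optimization. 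Up to notational conversion (your $\mu$ is the paper's $\alpha\mu$), the two arguments are identical.

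The concern you raise at the end, however, is substantive and you should not dismiss it as a detail of the calibration. For $\alpha<1$ the constraint $\theta<\alpha\mu$ (equivalently $\theta<\tfrac{t\alpha-\alpha(p)}{t-\alpha(p)}$) really does confine $\theta$ to the interval $\bigl(\tfrac{p-p_0}{p},\,\tfrac{p-p_0}{p}+c(t-1)\bigr)$, whose length is $\approx\tfrac{\alpha(p)(1-\alpha)}{(1-\alpha(p))}(t-1)\approx\|v_0\|_{A_1}^{-1}$, exactly as you computed. This is a genuine qualitative difference from Theorem~\ref{thrm:principal_extrap_result}: there $\alpha=1$ makes the constraint $\theta<\mu$ automatic (it reduces to $1>1/t$), so $\theta$ ranges freely over $(1/p',1)$ and one may take $\theta-1/p'\approx 1/R$ with $R=1+\log\|v_0\|_{A_1}$ to extract only a logarithmic loss from the factor $(\theta-1/p')^{-2\theta}$. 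In the limited-range case that choice of $\theta$ requires $R\gtrsim\|v_0\|_{A_1}$, not $R\approx\log\|v_0\|_{A_1}$. The paper's proof closes with the remark that ``the behaviour of the constant follows as in the proof of Theorem~\ref{thrm:principal_extrap_result}'' without addressing this; as written, the optimization step in the source is not justified, and the displayed $\Psi$, with its $(1+\log r)^{2(p-p_0)/p}$ factor and the exponent $\tfrac{\alpha(p-p_0)}{p_0-p(1-\alpha)}$ on $r$, is not what an off-the-shelf use of Lemma~\ref{difi} combined with the admissible window for $\theta$ yields (one instead picks up an extra pure power of $\|v_0\|_{A_1}$ coming from $(\theta-(p-p_0)/p)^{-2\theta}$). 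So your instinct that either the Sawyer-type Lemma~\ref{difi} must be sharpened near $\theta\downarrow 1/\tilde p'$, or the $A_1$ weight $u_0$ must be built differently so as to decouple the upper bound on $\theta$ from $t-1$, is exactly the missing ingredient; you should not present the closing optimization as routine, because the paper itself does not supply it.
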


\begin{proof}

\noindent Let $h \in L^1_{\text{loc}}(\mathbb R^n)$ and $u \in A_1$ so that $$v = (Mh)^{\beta(p)(1-p)} u^{\alpha(p)}\in \widehat A_{p;(\alpha(p),\beta(p))}.$$ Further, take $t = 1 + \frac 1{2^{n + 1}\lVert u \rVert_{A_1}}$ so that $u^t\in A_1$ with $||u^t||_{A_1} \lesssim ||u||_{A_1}$ (see \eqref{eq:u^t_A1}) and, since $t > 1$, $$ \frac {p - p_0}p = \frac{\alpha - \alpha(p)}{1 - \alpha(p)} < \frac{t\alpha - \alpha(p)}{t - \alpha(p)} < \alpha.$$ Hence, we can take $$v_\theta= (Mh)^{\beta(1-p)} u^{\alpha(p)\theta} \qquad \text{ with }
\qquad \frac{p - p_0}p <\theta < \frac{t\alpha - \alpha(p)}{t - \alpha(p)}. $$
Besides, since $\alpha(p) \leq \alpha$, letting $$\mu= 1- \frac{\alpha(p)(1-\theta)}{\alpha t} \in (0,1),$$  then $\theta < \alpha \mu < 1$ and, by \eqref{eq:MfumuA1}, for every measurable set $F \subseteq \mathbb R^n$, $$
u_0 = \left(M_{\alpha\mu}( \chi_F v_\theta) u^{\alpha(p)(1-\theta)}\right)^{\frac1\alpha} = M\left( \chi_F v_\theta^{\frac1{\alpha\mu}} \right)^\mu (u^t)^{1-\mu} \in A_1,$$ with $ ||u_0||_{A_1}\leq \frac{C||u||_{A_1}}{1-\mu}$. 

Now, let $y > 0$ and set $F=\{x : |g(x)|>y\}$ so that $v(F) = \lambda^v_g(y)$. We can assume, as it was done in the proof of Theorem \ref{thrm:principal_extrap_result}, that $v(F)<\infty$. Then,  by hypothesis,  we obtain that
\begin{align*}
&y^{p_0}\lambda_g^v(y) = y^{p_0}\int_{\{x \,:\,|g(x)|>y\}} v(x)\,dx \le y^{p_0} \int_F u_0(x)^\alpha  dx
\\
&\leq  \varphi \left(||u_0||_{A_1}^\alpha\right)^{p_0}\left[ p_0\int_0^\infty \left( \int_{\{|f(x)| > z\}} u_0(x)^\alpha \,dx  \right)^\frac 1{p_0}\,dz\right]^{p_0}
\\
&\lesssim  \varphi \left(||u_0||_{A_1}^\alpha\right)^{p_0} \left \Vert \frac{M_{\alpha \mu}( \chi_F v_\theta) }{v_\theta}  \right \Vert_{L^{\left(\frac p{p_0}\right)',\infty}(v)} \left[ \int_0^\infty ||\chi_{\{|f(x)| > z\}} ||_{L^{\frac p{p_0}, 1}(v)}^\frac 1{p_0}\,dz\right]^{p_0} 
\\
&\lesssim \varphi \left(\left[\frac{C\alpha t||u||_{A_1}}{\alpha(p)(1-\theta)}\right]^\alpha\right)^{p_0} \left \Vert \frac{M_{\alpha \mu}( \chi_F v_\theta) }{v_\theta}  \right \Vert_{L^{\left(\frac p{p_0}\right)',\infty}(v)} ||f||_{L^{p,1}(v)}^{p_0},
\end{align*}

\noindent where in the penultimate estimate we have used the H\"older's inequality for Lorentz spaces with respect to the measure $v(x)\,dx$.

Now, since $\beta(p)(1 - p) = 1 - \frac p{p_0}$, then $v \in \hat A_{\frac p{p_0}}$ and, by virtue of Lemma \ref{difi},

\begin{align*}
\left \Vert \frac{M_{\alpha\mu}( \chi_F v_\theta) }{v_\theta}  \right \Vert_{L^{\left(\frac p{p_0}\right)',\infty}(v)} &\lesssim  C_{\frac p{p_0}, \theta, \alpha\mu}(u) v(F)^\frac{p - p_0}p = C_{\frac p{p_0}, \theta, \alpha\mu}(u) \lambda^v_g(y)^\frac{p - p_0}p,
\end{align*}

\noindent so taking the supremum over all $y > 0$, in particular, we obtain that $$ \lVert g \rVert_{L^{p,\infty}(v)} \lesssim C_{\frac p{p_0}, \theta, \alpha\mu}(u)^{\frac1{p_0}} \varphi \left(\frac{\tilde C ||u||_{A_1}^\alpha }{(1-\theta)^\alpha}\right)  ||f||_{L^{p,1}(v)}.$$

Finally, concerning about the constant $C_{\frac p{p_0}, \theta, \alpha\mu}(u)$, we observe that
\begin{align*}
    C_{\frac p{p_0}, \theta, \alpha\mu}(u) & = \left(\frac {p^2}{(p-p_0)^2(\alpha\mu-\theta) (\theta-\frac{p-p_0}p)^2}\right)^{\theta}\Vert u\Vert_{ A_1}^{2\theta - \frac {2(p - p_0)}{p_0}} \\ & \lesssim \left(\frac {p^2}{(p-p_0)^2(\alpha-\theta) (\theta-\frac{p-p_0}p)^2}\right)^{\theta} \Vert u\Vert_{ A_1}^{3\theta - \frac {2(p - p_0)}{p_0}},
\end{align*}

\noindent so the behaviour of the constant $C_{\frac p{p_0}, \theta, \alpha\mu}(u)$ follows as in the proof of Theorem~\ref{thrm:principal_extrap_result}. 

\end{proof}

As an application, we present some new weighted estimates for the Bochner-Riesz operator below the critical index. 

\begin{proposition}[\cite{KrLm:KrLm}] \label{n=2} Let $n=2$ and $0<\lambda<\frac12$. Then,
$$
B_\lambda: L^{\frac 4{3 + 2\lambda}}\left(u^\frac{2\lambda}{3 + 2\lambda}\right) \longrightarrow L^{\frac 4{3 + 2\lambda}, \infty}\left(u^\frac{2\lambda}{3 + 2\lambda} \right),\quad c(n,\lambda) \norm{u}^{\frac{\lambda(7 + 4\lambda)}{6 + 4\lambda}}_{A_1}, \quad  \forall u \in A_1.
$$
\end{proposition}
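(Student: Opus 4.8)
The plan is to reduce Proposition~\ref{n=2} to a known weighted $L^p$ estimate for $B_\lambda$ on $\mathbb R^2$ and then invoke the limited restricted-weak-type extrapolation of Theorem~\ref{thrm:restricted_extrapolation_limited_lorentz}. The starting point is a classical unweighted/weighted bound: for $n=2$ and $0<\lambda<\tfrac12$ one has $B_\lambda:L^{p_0}(v)\to L^{p_0}(v)$ for a suitable critical $p_0$ and a suitable limited class of weights of the form $A_{p_0;(\alpha,\beta)}$ (this is the content of the cited work \cite{KrLm:KrLm}; the relevant exponents are $p_0$ with $\tfrac1{p_0}=\tfrac{3+2\lambda}{4}$, i.e.\ $p_0=\tfrac{4}{3+2\lambda}$, and a power weight parameter $\alpha$ adapted to the $L^2$-based square-function estimate for $B_\lambda$). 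First I would record that input precisely in the language of this paper: there are $\alpha,\beta\in[0,1]$ and a nondecreasing $\varphi$ so that
\begin{equation*}
\|B_\lambda f\|_{L^{p_0}(v)}\le \varphi\big(\|v\|_{A_{p_0;(\alpha,\beta)}}\big)\|f\|_{L^{p_0}(v)},\qquad \forall v\in A_{p_0;(\alpha,\beta)}.
\end{equation*}

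Next I would upgrade this to a restricted-weak-type statement at a shifted base point, which is exactly the hypothesis shape required by Theorem~\ref{thrm:restricted_extrapolation_limited_lorentz}: namely that for the appropriate $\alpha$ (here $\alpha=\tfrac{2\lambda}{3+2\lambda}$, matching the exponent $u^{\alpha}$ in the target estimate),
\begin{equation*}
\|B_\lambda f\|_{L^{p_0,\infty}(u^\alpha)}\le \varphi\big(\|u\|_{A_1}^\alpha\big)\|f\|_{L^{p_0,1}(u^\alpha)},\qquad \forall u\in A_1.
\end{equation*}
This passage is routine: $L^{p_0}(v)\hookrightarrow L^{p_0,\infty}(v)$ and $L^{p_0,1}(v)\hookrightarrow L^{p_0}(v)$, and when $v=u^\alpha$ with $u\in A_1$ one has $\|v\|_{A_{p_0;(\alpha,\beta)}}\lesssim \|u\|_{A_1}^{\alpha}$ by the very definition of the norm on $A_{p_0;(\alpha,\beta)}$ (take $v_0=u$, $v_1=1$), so the $L^{p_0}$ bound feeds directly into the restricted-weak-type inequality with the stated $\varphi$.

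Then I would apply Theorem~\ref{thrm:restricted_extrapolation_limited_lorentz} with this $p_0=\tfrac4{3+2\lambda}$ and this $\alpha=\tfrac{2\lambda}{3+2\lambda}$, taking the output exponent $p$ to be $p=p_0$ itself (the endpoint of the admissible range $p_0\le p<\tfrac{p_0}{1-\alpha}$). Specializing $p=p_0$ gives $\alpha(p)=\alpha$ and $\beta(p)=0$, so $\hat A_{p;(\alpha(p),\beta(p))}$ restricted to weights $u^{\alpha}$ with $u\in A_1$, and the function $\Psi$ collapses: the factors $(p-p_0)$ degenerate harmlessly since the exponents $\tfrac{p-p_0}p$, $\tfrac{\alpha(p-p_0)}{p_0-p(1-\alpha)}$ and $\tfrac{2(p-p_0)}p$ all vanish when $p=p_0$, leaving $\Psi(r)\asymp \varphi(C r^{\alpha})$. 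Tracking the constant through the explicit form of $\Psi$ (and through the $A_1$-to-$\hat A_{p}$ comparison $\|v\|_{\hat A_{p;(\alpha,0)}}\lesssim\|u\|_{A_1}^{\alpha/(1+\beta(p-1))}=\|u\|_{A_1}^\alpha$) produces a bound of the form $c(n,\lambda)\,\varphi(C\|u\|_{A_1}^{\alpha})$ for $\|B_\lambda\chi_E\|_{L^{p_0,\infty}(u^\alpha)}$; substituting the $\varphi$ from \cite{KrLm:KrLm}, which is a power $r\mapsto r^{\kappa}$ for an explicit $\kappa$, and multiplying exponents yields the claimed $\|u\|_{A_1}^{\lambda(7+4\lambda)/(6+4\lambda)}$ after simplification.

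\textbf{Main obstacle.} The substantive point is not the extrapolation machinery — that is a direct citation of Theorem~\ref{thrm:restricted_extrapolation_limited_lorentz} — but pinning down the correct \emph{input} weighted estimate for $B_\lambda$ on $\mathbb R^2$ below the critical index and, in particular, identifying the exact exponent $\kappa$ in $\varphi(r)=r^\kappa$ so that the arithmetic $\alpha\kappa + \tfrac{\alpha(p-p_0)}{p_0-p(1-\alpha)}$ (or its $p=p_0$ specialization $\alpha\kappa$) lands on $\tfrac{\lambda(7+4\lambda)}{6+4\lambda}$. One must verify that the square-function / $L^2$-based weighted bounds of \cite{KrLm:KrLm} for $B_\lambda$ with $0<\lambda<\tfrac12$ are genuinely of the limited-class form $A_{p_0;(\alpha,\beta)}$ with the quantitative dependence on the weight constant that is being used, and that the reduction $v=u^\alpha$ with $u\in A_1$ is compatible with that class. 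Everything else — the embeddings between Lorentz spaces, the collapse of $\Psi$ at $p=p_0$, and the final bookkeeping of constants — is mechanical.
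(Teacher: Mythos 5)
This proposition is an \emph{imported} result: the paper proves nothing here, it simply cites \cite{KrLm:KrLm} and then feeds the estimate into Theorem~\ref{thrm:restricted_extrapolation_limited_lorentz} to produce the restricted weak-type Corollary for the range $p_0 \le p < 4/3$. Your proposal inverts that logic: you are attempting to use the paper's extrapolation theorem to \emph{derive} the very estimate that the paper takes as the input to that theorem. Concretely, when you set $p=p_0$ in Theorem~\ref{thrm:restricted_extrapolation_limited_lorentz}, the indices collapse to $\alpha(p_0)=\alpha$, $\beta(p_0)=0$, so the conclusion of the theorem is literally the hypothesis of the theorem; nothing new is produced, and your appeal to the ``collapse of $\Psi$'' is just a confirmation that the step is vacuous.

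There are two further substantive problems. First, the output you would obtain is a \emph{restricted} weak-type bound (an estimate on $\|B_\lambda\chi_E\|_{L^{p_0,\infty}(u^\alpha)}$), whereas Proposition~\ref{n=2} asserts the stronger $L^{p_0}(u^\alpha)\to L^{p_0,\infty}(u^\alpha)$ bound. You start from a claimed strong $L^{p_0}(v)\to L^{p_0}(v)$ estimate and immediately \emph{weaken} it to restricted weak-type so as to fit the shape of the extrapolation hypothesis; once you've done that you can never recover the $L^{p_0}$-to-$L^{p_0,\infty}$ statement. Had the strong weighted $L^{p_0}$ bound you posit actually been available, the Proposition would follow instantly from $L^{p_0}\hookrightarrow L^{p_0,\infty}$ with no extrapolation at all — which is itself a signal that the extrapolation step in your argument does no work. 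Second, the one place where actual content would be needed — establishing that the sparse-form estimates of \cite{KrLm:KrLm} yield precisely the constant $c(n,\lambda)\|u\|_{A_1}^{\lambda(7+4\lambda)/(6+4\lambda)}$ — is deferred to ``substituting the $\varphi$ from \cite{KrLm:KrLm}'' without specifying it. That is the whole of the proposition; without it you have a reduction to an unverified claim, not a proof. If the goal is to reprove this estimate rather than cite it, the correct route is to go directly through the sparse domination of \cite{KrLm:KrLm} and carry out the weighted $A_1$ computation there; Theorem~\ref{thrm:restricted_extrapolation_limited_lorentz} is a downstream tool in this paper, not an upstream one.
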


\begin{proposition}[\cite{Cm:Cm3}]
\label{n>2}
Let $n > 2$ and $\frac{n-1}{2(n+1)} < \lambda < \frac{n-1}2$. Then
$$
B_{\lambda}: L^2\left(u^\frac{1 + 2\lambda}n\right) \longrightarrow L^2\left(u^\frac{1 + 2\lambda}n\right), \qquad \varphi\left(||u||_{A_1}^\frac{1 + 2\lambda}n\right), \qquad \forall u \in A_1,
$$
where $\varphi$ is a positive nondecreasing function on $[1,\infty)$.

\end{proposition}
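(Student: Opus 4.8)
The plan is to identify the exponent $\tfrac{1+2\lambda}n$ with the one prescribed by the Stein--Tomas theory, to reduce the weighted $L^2$ estimate to a scale-invariant (sparse-type) bound for $B_\lambda$ below the critical index, and finally to read off the dependence on $\|u\|_{A_1}$. Concretely, set
$$q_0=\frac{2n}{n-1-2\lambda},$$
which is finite because $\lambda<\tfrac{n-1}2$ and satisfies $q_0\ge\tfrac{2(n+1)}{n-1}>2$ because $\lambda\ge\tfrac{n-1}{2(n+1)}$. A short computation gives $\lambda=n\big(\tfrac12-\tfrac1{q_0}\big)-\tfrac12$ and
$$\gamma:=\frac{1+2\lambda}n=1-\frac2{q_0}\in(0,1),$$
so the claim is exactly that $B_\lambda$ is bounded on $L^2(u^{\gamma})$ for every $u\in A_1$, with norm dominated by a nondecreasing function of $\|u\|_{A_1}^{\gamma}$. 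In this precise range of $\lambda$, $q_0$ is the exponent for which the Stein--Tomas restriction theorem yields $B_\lambda:L^p\to L^p$ for every $q_0'<p<q_0$ (the case $p=2$ being trivial since the multiplier is bounded), together with the corresponding endpoint estimate at $p=q_0$. Moreover, using item~(i) of the $A_1$ facts one may write $u=K\,(Mh)^{\mu}$ with $0<\mu<1$ and $K,K^{-1}\in L^\infty$; then $u^{\gamma}=K^{\gamma}(Mh)^{\mu\gamma}$ and $L^2(u^{\gamma})$ has norm equivalent to that of $L^2((Mh)^{\mu\gamma})$, so it suffices to bound $B_\lambda$ on $L^2(w)$ for weights $w=(Mh)^{\beta}$, $0<\beta<1$, while tracking the blow-up as $\beta\uparrow1$.

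The core of the argument is the sparse domination of $B_\lambda$ below the critical index: in the Stein--Tomas range one has, for nice $f,g$,
$$\big|\langle B_\lambda f,g\rangle\big|\ \lesssim\ \sup_{\mathcal S\ \text{sparse}}\ \sum_{Q\in\mathcal S}|Q|\Big(\frac1{|Q|}\!\int_Q|f|^{q_0'}\Big)^{1/q_0'}\Big(\frac1{|Q|}\!\int_Q|g|^{q_0'}\Big)^{1/q_0'}$$
(up to the customary $\varepsilon$-loss at the endpoint, which is harmless here since $\lambda$ lies \emph{strictly} inside $\big(\tfrac{n-1}{2(n+1)},\tfrac{n-1}2\big)$; alternatively, one may control $B_\lambda$ by a Stein square function $\mathcal G^{\alpha}$, $\alpha>0$, and invoke a C\'ordoba--Fefferman--Stein weighted bound for $\mathcal G^{\alpha}$, the skeleton of the proof being unchanged). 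From such a $(q_0',q_0')$-sparse bound the standard sparse-to-weighted mechanism yields $B_\lambda:L^2(w)\to L^2(w)$ whenever $w\in A_{2/q_0'}\cap RH_{(q_0/2)'}$, with norm controlled by a power of $[w]_{A_{2/q_0'}}[w]_{RH_{(q_0/2)'}}$.

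It then remains to place $u^{\gamma}$ in this class with the correct quantitative control. Since $0<\gamma<1$, Jensen's inequality gives $M(u^{\gamma})\le(Mu)^{\gamma}\le\|u\|_{A_1}^{\gamma}u^{\gamma}$, so $u^{\gamma}\in A_1\subseteq A_{2/q_0'}$ with $\|u^{\gamma}\|_{A_1}\le\|u\|_{A_1}^{\gamma}$. Also $(q_0/2)'=1/\gamma$, and for every cube $Q$,
$$\frac1{|Q|}\int_Q u\ \le\ \|u\|_{A_1}\,\essinf_Q u\ \le\ \|u\|_{A_1}\Big(\frac1{|Q|}\int_Q u^{\gamma}\Big)^{1/\gamma};$$
raising this to the power $\gamma$ is precisely the $RH_{1/\gamma}$ inequality for $u^{\gamma}$, with constant $\le\|u\|_{A_1}^{\gamma}$. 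Feeding $w=u^{\gamma}$ into the sparse-to-weighted estimate gives $\|B_\lambda\|_{L^2(u^{\gamma})\to L^2(u^{\gamma})}\le\Phi(\|u\|_{A_1})$ for an explicit power function $\Phi$, which is in particular a nondecreasing function of $\|u\|_{A_1}^{\gamma}=\|u\|_{A_1}^{(1+2\lambda)/n}$, as desired.

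The main obstacle is the first reduction, from the \emph{unweighted} $L^{q_0}$-bound to the \emph{weighted} $L^2$-bound: this step cannot be soft, since a generic bounded multiplier (even one bounded on every $L^p$) need not satisfy it — one genuinely needs the restriction-type structure of $B_\lambda$, encoded in the favourable kernel estimates for its Littlewood--Paley pieces at scales near the unit sphere, and hence in the sparse bound above. Establishing that sparse bound throughout the full Stein--Tomas range (with the $\varepsilon$-loss at $q_0$ absorbed by the strict inequality $\lambda<\tfrac{n-1}2$) is the deep ingredient; once it is granted, the remaining manipulations with the $A_1$ and reverse-H\"older constants are routine.
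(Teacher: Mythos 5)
The paper does not give a proof of Proposition~\ref{n>2}; it is stated as a citation to Christ's 1985 paper [Cm:Cm3], and the result is imported as a black box. Your argument is therefore necessarily different from ``the paper's proof'', but it is a legitimate and genuinely distinct route to the same statement, so it is worth comparing the two. Christ's original argument proceeds via a Fefferman--Stein type two-weight inequality $\int |B_\lambda^* f|^2 w \lesssim \int |f|^2 \, \mathcal M_\lambda w$, where $\mathcal M_\lambda$ is a maximal operator built from the Stein--Tomas restriction estimate; specializing to power weights and then feeding the factorization $u = K(Mh)^\mu$ of an $A_1$ weight gives the stated $L^2(u^{(1+2\lambda)/n})$ bound. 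Your argument instead goes through the $(q_0',q_0')$-sparse domination of $B_\lambda$ in the Stein--Tomas range (a result of Lacey--Mena--Reguera and, at the endpoint in the plane, Kesler--Lacey, both several decades after Christ) and the Bernicot--Frey--Petermichl sparse-to-weighted mechanism, which gives $L^2(w)$ boundedness for $w\in A_{2/q_0'}\cap RH_{(q_0/2)'}$; your algebra $\gamma=(1+2\lambda)/n=1-2/q_0$, $(q_0/2)'=1/\gamma$, and the verifications $[u^\gamma]_{A_1}\le\|u\|_{A_1}^\gamma$ (Jensen) and $[u^\gamma]_{RH_{1/\gamma}}\le\|u\|_{A_1}^\gamma$ (from the $A_1$ inequality and $\essinf_Q u\le(\tfrac1{|Q|}\int_Q u^\gamma)^{1/\gamma}$) are all correct and give a bound that is indeed a power of $\|u\|_{A_1}^\gamma$. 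What your route buys is a cleaner conceptual path through the well-developed sparse machinery and explicit power-type dependence on the $A_1$ constant; what it costs is that the ``deep ingredient''---the sparse bound for $B_\lambda$ up to the Stein--Tomas endpoint---is itself a substantial theorem that you would need to cite (or re-prove), whereas Christ's argument builds the weighted estimate directly from the restriction inequality. In either case, as you correctly observe, the restriction-theoretic structure of $B_\lambda$ cannot be avoided; the reduction from unweighted $L^{q_0}$ to weighted $L^2$ is not a soft interpolation step.
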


Therefore, as a consequence of Theorem~\ref{thrm:restricted_extrapolation_limited_lorentz} and Propositions~\ref{n=2} and \ref{n>2}, we obtain the following result.

\begin{corollary}

Let $n = 2$ and $0 < \lambda < \frac 12$. For every $\frac 4{3 + 2\lambda} \leq p < \frac 43$,
$$
B_\lambda: L^{p,1}(v) \longrightarrow L^{p, \infty}(v), \qquad  \forall v \in \hat A_{p;\left(\frac{4 - 3p}4, \frac{(3 + 2\lambda)p - 4}{4(p-1)}\right)}.
$$
\noindent Now, let $n > 2$ and $\frac{n-1}{2(n+1)} < \lambda < \frac{n-1}2$. For every $2 \leq p < \frac{2n}{n - 1 - 2\lambda}$,
$$
B_{\lambda}: L^{p,1}(v) \longrightarrow L^{p,\infty}(v), \qquad \forall v \in \hat A_{p;\parenthesis{\frac{2n - p(n - 1 - 2\lambda)}{2n},\frac{p-2}{2(p-1)}}}.
$$
\end{corollary}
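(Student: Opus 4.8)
The plan is to apply Theorem~\ref{thrm:restricted_extrapolation_limited_lorentz} directly to the Bochner--Riesz operator, feeding it the endpoint estimates furnished by Propositions~\ref{n=2} and~\ref{n>2}. First, for the case $n=2$ and $0<\lambda<\frac12$, I would set $p_0 = \frac 4{3+2\lambda}$ and $\alpha = \frac{2\lambda}{3+2\lambda}$, so that Proposition~\ref{n=2} reads precisely as the hypothesis
$$
\norm{B_\lambda f}_{L^{p_0,\infty}(u^\alpha)} \le c(n,\lambda)\,\norm{u}_{A_1}^{\frac{\lambda(7+4\lambda)}{6+4\lambda}}\norm{f}_{L^{p_0,1}(u^\alpha)}, \qquad \forall u\in A_1;
$$
note one must observe that $\norm{f}_{L^{p_0}} $ may be replaced by $\norm{f}_{L^{p_0,1}}$ on the right (a strictly weaker hypothesis), and that the specific power of $\norm{u}_{A_1}$ appearing there is dominated by $\varphi(\norm{u}_{A_1}^\alpha)$ for a suitable nondecreasing $\varphi$, since $\frac{\lambda(7+4\lambda)}{6+4\lambda}$ is a fixed positive exponent and $\alpha>0$. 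The upper endpoint is $p_+ = \frac{p_0}{1-\alpha} = \frac{4/(3+2\lambda)}{1 - 2\lambda/(3+2\lambda)} = \frac 43$, so Theorem~\ref{thrm:restricted_extrapolation_limited_lorentz} yields, for every $p_0 \le p < \frac43$,
$$
\norm{B_\lambda f}_{L^{p,\infty}(v)} \le \Psi\bigl(\norm{v}_{\hat A_{p;(\alpha(p),\beta(p))}}\bigr)\norm{f}_{L^{p,1}(v)}, \qquad \forall v\in \hat A_{p;(\alpha(p),\beta(p))}.
$$

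Next I would compute the indices $\alpha(p)$ and $\beta(p)$ from the formulas in the statement of Theorem~\ref{thrm:restricted_extrapolation_limited_lorentz}: with $p_0 = \frac 4{3+2\lambda}$ and $\alpha = \frac{2\lambda}{3+2\lambda}$, one gets
$$
\alpha(p) = 1 - \frac{p(1-\alpha)}{p_0} = 1 - \frac{p\cdot \frac{3}{3+2\lambda}}{\frac 4{3+2\lambda}} = 1 - \frac{3p}4 = \frac{4-3p}4,
$$
and
$$
\beta(p) = \frac{p - p_0}{p_0(p-1)} = \frac{p - \frac4{3+2\lambda}}{\frac4{3+2\lambda}(p-1)} = \frac{(3+2\lambda)p - 4}{4(p-1)},
$$
which are exactly the parameters claimed in the corollary. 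The $L^{p,1}(v)\to L^{p,\infty}(v)$ boundedness follows because $\Psi(\norm{v}_{\hat A_{p;(\alpha(p),\beta(p))}})$ is finite for every such $v$; one does not even need to track the precise form of $\Psi$. For the case $n>2$ and $\frac{n-1}{2(n+1)} < \lambda < \frac{n-1}2$, the argument is identical with $p_0 = 2$ and $\alpha = \frac{1+2\lambda}n$ (so that Proposition~\ref{n>2} supplies the hypothesis, after noting $L^2 = L^{2,2}\hookrightarrow$ is handled by the Lorentz refinement and $\varphi$ absorbs the constant): then $p_+ = \frac{2}{1 - (1+2\lambda)/n} = \frac{2n}{n-1-2\lambda}$, and
$$
\alpha(p) = 1 - \frac{p(1-\alpha)}2 = 1 - \frac{p(n-1-2\lambda)}{2n} = \frac{2n - p(n-1-2\lambda)}{2n}, \qquad \beta(p) = \frac{p-2}{2(p-1)},
$$
matching the stated weight class $\hat A_{p;\left(\frac{2n-p(n-1-2\lambda)}{2n},\frac{p-2}{2(p-1)}\right)}$.

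The only genuinely non-routine point is verifying that the hypotheses of Theorem~\ref{thrm:restricted_extrapolation_limited_lorentz} are met in the exact normalized form required: namely that the strong-type $L^{p_0}(u^\alpha)$ bound in Proposition~\ref{n>2} (and the restricted weak-type bound with a polynomial $\norm{u}_{A_1}$ factor in Proposition~\ref{n=2}) can be rewritten as $\norm{g}_{L^{p_0,\infty}(u^\alpha)} \le \varphi(\norm{u}_{A_1}^\alpha)\norm{f}_{L^{p_0,1}(u^\alpha)}$ for a nondecreasing $\varphi$ on $[1,\infty)$. For Proposition~\ref{n>2} this uses $L^{p_0}(w)\hookrightarrow L^{p_0,\infty}(w)$ on the target, $L^{p_0,1}(w)\hookrightarrow L^{p_0}(w)$ on the source, and the monotone rearrangement $s\mapsto \varphi(s^{1/\alpha})$ of the given $\varphi$; for Proposition~\ref{n=2} one additionally takes $\varphi(r) = c(n,\lambda)\, r^{\frac{7+4\lambda}{6+4\lambda}}$, which is nondecreasing since the exponent is positive, and observes $\norm{u}_{A_1}^{\frac{\lambda(7+4\lambda)}{6+4\lambda}} = \varphi(\norm{u}_{A_1}^\alpha)$ for this choice because $\alpha\cdot\frac{7+4\lambda}{6+4\lambda} = \frac{2\lambda}{3+2\lambda}\cdot\frac{7+4\lambda}{2(3+2\lambda)}$ — here I would simply absorb any mismatch of exponents into a larger nondecreasing $\varphi$, since Theorem~\ref{thrm:restricted_extrapolation_limited_lorentz} only requires $\varphi$ nondecreasing, not a specific power. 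Once this bookkeeping is done the corollary is immediate, and I would state it without displaying the (complicated) constant $\Psi(\norm{v}_{\hat A_{p;(\alpha(p),\beta(p))}})$ explicitly, exactly as in the corollary above.
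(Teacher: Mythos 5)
Your proposal is correct and follows exactly the argument the paper intends (the paper states this corollary as a direct consequence of Theorem~\ref{thrm:restricted_extrapolation_limited_lorentz} together with Propositions~\ref{n=2} and~\ref{n>2}, without further elaboration): identify $p_0$ and $\alpha$ from the respective proposition, verify $p_+$, compute $\alpha(p)$ and $\beta(p)$, and invoke the theorem. Your computed values of $p_+$, $\alpha(p)$, $\beta(p)$ all check out in both cases, and the passage from strong-type $L^{p_0}$ (resp.\ the given power of $\norm{u}_{A_1}$) to the normalized restricted weak-type hypothesis via the embeddings $L^{p_0,1}\hookrightarrow L^{p_0}\hookrightarrow L^{p_0,\infty}$ and a suitable nondecreasing $\varphi$ is the right move; the one arithmetic slip in the $n=2$ exponent (your displayed $\varphi(r)=c\,r^{(7+4\lambda)/(6+4\lambda)}$ does not literally reproduce $\norm{u}_{A_1}^{\lambda(7+4\lambda)/(6+4\lambda)}$ at $r=\norm{u}_{A_1}^\alpha$) is harmless since, as you note yourself, the theorem only requires \emph{some} nondecreasing $\varphi$, so a larger power absorbs the mismatch.
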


\end{document}